\documentclass[10pt]{article}

\usepackage[
  letterpaper,
  textwidth=5.125in,
  textheight=8.25in
]{geometry}

\usepackage{amsmath,amssymb,amsfonts,amsthm,amsopn}
\usepackage{mathtools}

\usepackage{graphicx}
\usepackage{epstopdf}
\usepackage{booktabs}
\usepackage{enumitem}
\usepackage{float}
\usepackage{subcaption}
\usepackage{cancel}
\usepackage{etoolbox}
\usepackage{algorithmicx}
\usepackage{algpseudocodex}

\usepackage{hyperref}
\hypersetup{
  colorlinks=true,
  citecolor=blue,
  linkcolor=blue,
  urlcolor=blue,
  pdftitle={Mixed precision Newton's method for optimization},
  pdfauthor={N. Brisebarre, G. Carrino, T. Mary, and E. Riccietti}
}

\usepackage{zref-clever}

\ifpdf
  \DeclareGraphicsExtensions{.eps,.pdf,.png,.jpg}
\else
  \DeclareGraphicsExtensions{.eps}
\fi

\newtheorem{theorem}{Theorem}[section]
\newtheorem{lemma}[theorem]{Lemma}

\newtheorem{errormodel}[theorem]{Error Model}

\theoremstyle{definition}
\newtheorem{definition}[theorem]{Definition}

\theoremstyle{remark}

\title{Mixed precision Newton's method for optimization\thanks{Version of \today.}}

\author{
Nicolas Brisebarre\thanks{CNRS, ENS de Lyon, Inria, Université Claude Bernard Lyon 1, LIP, UMR 5668, Lyon, France (nicolas.brisebarre@cnrs.fr)}
\and
Giuseppe Carrino\thanks{ENS de Lyon, CNRS, Inria, Université Claude Bernard Lyon 1, LIP, UMR 5668, Lyon, France (giuseppe.carrino@ens-lyon.fr)}
\and
Theo Mary\thanks{Sorbonne Université, CNRS, LIP6, F-75005 Paris, France (theo.mary@lip6.fr)}
\and
Elisa Riccietti\thanks{ENS de Lyon, CNRS, Inria, Université Claude Bernard Lyon 1, LIP, UMR 5668, Lyon, France (elisa.riccietti@ens-lyon.fr)}
}

\date{\today}

\zcsetup{abbrev=false}
\zcsetup{nameinlink=true}
\AddToHook{env/lemma/begin}{\zcsetup{countertype={theorem=lemma}}}
\AddToHook{env/corollary/begin}{\zcsetup{countertype={theorem=corollary}}}
\AddToHook{env/proposition/begin}{\zcsetup{countertype={theorem=proposition}}}
\AddToHook{env/definition/begin}{\zcsetup{countertype={theorem=definition}}}
\AddToHook{env/remark/begin}{\zcsetup{countertype={theorem=remark}}}
\AddToHook{env/errormodel/begin}{\zcsetup{countertype={theorem=errormodel}}}
\AddToHook{cmd/appendix/before}{\zcsetup{countertype={section=appendix}}}
\AddToHook{env/assumption/begin}{%
   \zcsetup{countertype={theorem=assumption}}}

\zcRefTypeSetup{equation}{
  name-sg=,
  name-pl=,
  Name-sg=,
  Name-pl=,
  name-sg-ab=,
  name-pl-ab=,
  Name-sg-ab=,
  Name-pl-ab=,
}

\zcRefTypeSetup{errormodel}{
    cap=true,
    Name-sg=Error Model,
    Name-pl=Error Models,
    Name-sg-ab=Error Model,
    Name-pl-ab=Error Models,
}

\zcRefTypeSetup{assumption}{cap=true}
\zcRefTypeSetup{remark}{cap=true}
\zcRefTypeSetup{theorem}{cap=true}
\zcRefTypeSetup{lemma}{cap=true}
\zcRefTypeSetup{corollary}{cap=true}
\zcRefTypeSetup{proposition}{cap=true}
\zcRefTypeSetup{algorithm}{cap=true}
\zcRefTypeSetup{appendix}{cap=true}
\zcRefTypeSetup{definition}{cap=true}
\zcRefTypeSetup{figure}{cap=true}

\setlist[enumerate]{topsep=1mm}
\setlist[itemize]{topsep=1mm}

\numberwithin{equation}{section}
\numberwithin{figure}{section}
\numberwithin{table}{section}

\newcommand{\xh}{\widehat{x}}
\newcommand{\dhat}{\widehat{d}}

\newcommand{\bbR}{\mathbb{R}}

\newcommand{\Hi}{H(\xh_i)}

\newcommand{\Hstar}{H(x^*)}
\newcommand{\gi}{g(\xh_i)}
\newcommand{\ginext}{g(\xh_{i+1})}

\newcommand{\Ji}{J(\xh_i)}

\newcommand{\Jstar}{J(x^*)}
\newcommand{\Si}{S(\xh_i)}

\newcommand{\Sstar}{S(x^*)}
\newcommand{\eh}{\epsilon^H_i}
\newcommand{\eg}{\epsilon^g_i}

\newcommand{\ei}{\epsilon_i}

\newcommand{\ki}{\zeta_i}
\newcommand{\errg}{e_i^g}
\newcommand{\errh}{E_i^H}
\newcommand{\errsum}{e_i^+}

\newcommand{\errconvrate}{\theta_i}

\DeclarePairedDelimiter{\abs}{\lvert}{\rvert}
\DeclarePairedDelimiter{\norm}{\lVert}{\rVert}

\newcommand{\limacc}{\gamma_i}
\newcommand{\limg}{\psi_i}

\floatstyle{ruled}
\newfloat{algorithm}{tbp}{loa}
\floatname{algorithm}{Algorithm}

\apptocmd{\sloppy}{\hbadness 10000\relax}{}{}

\DeclareMathOperator{\fl}{\operatorname{f\kern.2ptl}}
\def\op{\mathbin{\mathrm{op}}}

\begin{document}

\maketitle

\begin{abstract}
Second-order optimization methods, such as Newton’s algorithm, achieve fast local
convergence and high accuracy, but their practical use is often limited by high
computational costs. To mitigate this issue, variants such as inexact and
quasi-Newton methods are widely used. A complementary and promising approach to
improve the efficiency of the method is to employ mixed precision arithmetic,
using different floating-point precisions for different operations, based on
their impact on the convergence and accuracy of the method. In this work, we
perform an error analysis of Newton's method accounting for different sources of
inexactness, including approximations and rounding errors. We present a
convergence analysis for the generated sequence, establishing bounds on the
convergence rate and attainable accuracy. This theoretical framework covers
quasi-Newton and inexact Newton methods, and is leveraged to propose mixed
precision algorithms. We present a wide set of numerical experiments to
illustrate our theoretical results and the behavior of Newton's method and its
approximate variants in mixed precision floating-point arithmetic.
\end{abstract}

\paragraph{Keywords.}
Newton's method, mixed precision, error analysis, floating-point arithmetic,
inexact Newton, quasi-Newton.

\section{Introduction}\label{sec:intro}

Modern computational science, including fields such as machine learning,
inverse problems, image restoration, and physical simulations, relies on the
efficient solution of complex large-scale optimization problems. These problems
are often ill-conditioned or ill-posed, making them particularly challenging to
solve efficiently. Second-order methods, most notably Newton's method, address
this challenge by exploiting curvature information through the Hessian matrix
of second derivatives. By accounting for the local geometry of the objective
function, they generate search directions that are both appropriately scaled
and well oriented. As a result, second-order methods are often the method of
choice in these settings, offering robust performance together with quadratic
convergence in a neighborhood of the
optima~\cite{nocedalWright,boyd2004convex}.

Despite these
advantages, the adoption of Newton's method is hindered by its high
computational cost per iteration, which is due to the need of
forming the Hessian and solving the resulting (potentially large) linear
system needed to compute the search direction. To mitigate these costs,
various approximations of Newton's method have been developed, including
quasi-Newton methods that replace the Hessian by some
approximations~\cite{martinez2000practicalquasinewton}, and inexact Newton
methods that solve the linear systems
approximately~\cite{dembo1982inexactnewton}.
Nevertheless, even approximate Newton methods remain quite computationally
intensive and so, in this work, we are interested in another promising
and complementary direction to reduce the computational burden of second-order
methods: the use of low precision floating-point arithmetic.
The rise of specialized hardware accelerators, such as 
NVIDIA’s tensor cores~\cite{cuda}, has driven the successful use of
low precision arithmetic to reduce computational and memory costs.  However,
the naive application of low precision can lead to a significant loss of accuracy,
and, in the case of iterative methods, to a slower (or lack of) convergence.  
In order to minimize computational cost while preserving convergence and solution accuracy,
the use of \emph{mixed precision algorithms} has spread in different fields. Notably, many such algorithms have been 
successfully developed in numerical linear algebra; see~\cite{higham2022Survey} for a survey. 
In contrast, the use of mixed precision arithmetic for general nonlinear optimization algorithms
remains largely unexplored and lacks a unifying theoretical framework.

In this work, we address this theoretical gap by proposing a rigorous error
analysis for Newton's method for optimization, accounting for various
sources of inexactness.  Our central contribution is to perform a convergence study of
Newton's method under the assumption that the three main steps of the algorithm
(gradient computation, Hessian-related steps, and iterate update) are computed
inexactly, with possibly different levels of precision.  This study allows for
assessing the impact of the perturbed operations on the final solution accuracy
and on the convergence rate. In turn, this allows for deriving precise
guidelines for assigning a different precision to the various operations.

Importantly,  our error analysis is general enough to encompass a wide range of
approximations, including not only floating-point arithmetic, but also inexact
Newton and quasi-Newton methods (for the latter, we specifically focus on the
Gauss--Newton method). This allows us not only to recover known convergence
results for such variants of Newton's method when all the operations are
performed in exact arithmetic, but also to incorporate rounding errors in such variants. 
This leads us to analyze and propose mixed precision implementations of inexact and quasi-Newton methods, 
whose convergence and accuracy is covered by our theoretical results. Our analysis highlights
the interplay between approximation and rounding errors, and shows how to balance both sources of error.

We illustrate our theoretical findings through extensive numerical experiments,
demonstrating the soundness and generality of our error analysis in
practice and providing useful insights into the behavior of Newton's method and its variants
in mixed precision arithmetic.

\subsection{Related work}

Mixed precision optimization (meaning optimization methods in which different
computations can have different accuracy, or different quantities can be subject to errors of different magnitudes) has been widely studied in the literature. 
  The predominant
approach consists in analyzing the convergence of optimization algorithms in
the presence of inexact function or derivative evaluations \cite{cartis2022evaluation}. 

Many works consider frameworks in which the accuracy of the estimates increases
with time, imposing a decreasing absolute or relative error on the function and
gradient approximations to ensure the convergence properties of the methods
\cite{bellavia2018levenberg,vicente,scheinberg}. Most often these methods are
studied in the finite sum context, suited to machine learning applications,
where the approximations are built by subsampling techniques
\cite{mahoney_sampling,bellavia2023gn,bellavia2020subsampled,roosta2019sub}.
Another line of work inherits techniques from derivative-free optimization and
adopts fully probabilistic frameworks based on the fully linear assumption on
the models employed \cite{vicente,scheinberg,bergou2022stochastic}.  Most of
the works on inexact optimization consider globally convergent methods and
propose a worst-case analysis
\cite{yao2021inexactnewtoncg,bellavia2021adaptive,yao2021inexact}, which counts
the number of iterations necessary to drive the norm of the gradient below a
given threshold; this leads to bounds that are usually pessimistic and rarely
observed in practice. 

We adopt a different perspective here, coming from numerical and error
analysis.  We consider Newton's method without a globalization strategy and we
focus on a deterministic local convergence analysis, to target sharper and more
informative bounds on the convergence of the iterates.  From this perspective
many works have considered root-finding Newton's method, see for
example~\cite{dennis_schnabel,lancaster1996newtonanalysis,wozniakowski1977stabilitynonlineareq,
ypma1983roundingerrorsnewton,ypma1984localinexactnewton,tisseur2001newton,kelley2022newton,
kelley2023threeprec}.  These tighter analyses allow in particular for assessing
much more precisely the effect of rounding errors on Newton's methods.
Not many works in optimization are concerned with this aspect; we mention
~\cite{monnet2023multiprecision,vasin2026lowerupperboundsconvergence}, which
focus on first-order methods,
and~\cite{gratton2019notesolvingnonlinearoptimization}, which, although
studying a second-order optimizer, does not address Hessian approximations. 
    
Our work is closest to Tisseur's~\cite{tisseur2001newton}, which
is focused on root-finding Newton's method. Our
analysis is an adaptation and extension of that in \cite{tisseur2001newton} to the
distinct setting of unconstrained optimization. While the two methods are
intimately related, the analysis for
nonlinear systems does not directly apply to optimization problems, and we believe
that the latter  deserves a proper dedicated analysis.
In particular, specializing the analysis to optimization
problems allows for taking into account sources of errors that
are specific to this context, such as gradient or Hessian approximations. 
Moreover, while adapting Tisseur's analysis to our context, we have also
made some small changes and improvements to make the bounds more readable and slightly sharper.

\subsection{Organization of the paper}

The article is organized as follows. 
\zcref[S]{sec:app_n_conv} presents the error analysis of mixed precision Newton's method in a general framework.
\zcref[S]{sec:n_approx} discusses the application of our framework to floating-point arithmetic, and inexact and Gauss--Newton's methods.
\zcref[S]{sec:numerical_experiments} presents numerical experiments with all these variants.
\zcref[S]{sec:conclusions} provides concluding remarks.

\subsection{Notations}\label{par:notation}

All computed quantities are denoted by a hat. We denote by $\norm{\cdot}$ any
vector norm and the corresponding operator norm, unless otherwise specified, and by $\kappa(A) = \norm{A}\norm{A^{-1}}$ the condition number of a matrix
$A$.   Finally, given a point $x\in \bbR^n$ and a radius $\rho >0$, we
denote by $B_{\rho}(x) = \{y \in \bbR^n : \norm{y - x} < \rho\}$ the open
ball of radius $\rho$ centered in $x$.

\section{Mixed precision Newton's method for optimization}\label{sec:app_n_conv}

Given a twice continuously differentiable function $f: \bbR^n \to \bbR$, a general minimization problem can be stated as follows:
\begin{equation} \label{eq:min_problem}
    \text{minimize } f(x) \text{ w.r.t. } x \in \bbR^n.
\end{equation}

We denote by $g:\mathbb{R}^n\rightarrow \mathbb{R}^n$ the gradient of $f$ and
by $H:\mathbb{R}^n\rightarrow \mathbb{R}^{n\times n}$ its Hessian.  Newton's
method for optimization is an iterative algorithm that generates a sequence of
approximations $\{x_{i}\}$ to a minimizer $x^*$ of $f$ using the following
update rule, assuming that $H(x_i)$ is positive definite:

\begin{equation} \label{eq:newton_update}
    \begin{aligned}
        \text{solve } H(x_i)d_i = - g(x_i), \\
        x_{i+1} = x_i + d_i.
    \end{aligned}
\end{equation}

The behavior of Newton's method in exact arithmetic is well
understood~\cite[Thm.~3.5]{nocedalWright}. If initialized close enough to a
solution, Newton's method generates a well-defined sequence that converges to
that solution at a quadratic convergence rate.  In this section, our goal is to
study the behavior of the method when the operations in~\zcref{eq:newton_update} are subject to errors. These errors may simply be
rounding errors due to finite precision or approximation errors that arise when
inexact or quasi-Newton variants are used. Moreover, we allow different operations to be
affected by errors of different size, and so we refer to this method
as mixed precision Newton's method. 

Specifically, we consider the following error model, 
where computed quantities affected by errors 
are marked by a hat.

\begin{errormodel}\label{def:err_model_newton}
At each iteration $i$, mixed precision Newton's step satisfies
\begin{subequations}
\begin{align}
    &\dhat_i := - \big(H(\xh_i) + \errh\big)^{-1}\big(g(\xh_i) + \errg\big), \label{eq:d}\\
    &\xh_{i+1} =  \xh_i + \dhat_i + \errsum, \label{eq:single_eq_newton}\\
    &\norm{\errg}\le \eg, \label{E^g}\\
    &\norm{ \errh} \leq \eh \norm{ H(\xh_i)}, \label{E^H}\\
    &\norm{ \errsum } \leq \ei \big(\norm{ \xh_i} + \norm{ \dhat_i}\big). \label{E^+}
\end{align}
\end{subequations}
\end{errormodel}

This model depends on three error terms $\errg$, $\errh$, and $\errsum$, which are each bounded as follows.
\begin{itemize}
\item
We bound the norm of the gradient error $\norm{\errg}$ 
with an absolute error of size $\eg$, which is a general term
that accounts for inexactness in the gradient evaluation.
\item
We bound the norm of the Hessian error $\norm{\errh}$
with a relative normwise error of size $\eh$. This accounts
for both the error incurred in forming the Hessian and the backward error for
solving the associated linear system.
\item
We bound the norm of the update error $\norm{\errsum}$
with a relative normwise error of size $\ei$. This accounts for
the error in updating the iterate and rounding it to the working precision.

\end{itemize}
By using the subscript $i$, we account for the nonconstant behavior of
rounding errors, and for potential adaptive precision strategies that vary the precisions
across iterations---even though exploring this setting is outside the scope of this article, it remains
a field of interest for future work. 

We will assume Lipschitz continuity on the Hessian, as defined in~\zcref{def:lip}, which is quite standard to ensure quadratic
convergence~\cite{nocedalWright}.

\begin{definition}\label{def:lip}
    A function $\varphi$ is said to be Lipschitz continuous in an open set $\Omega \subseteq \bbR^n$ if there exists a constant $L \geq 0$, called the Lipschitz constant, such that
    \begin{equation}
        \norm{\varphi(v)-\varphi(w)} \leq L \norm{v-w}, \quad \forall v, w \in \Omega.
    \end{equation}
\end{definition}

Our main result is stated in the following theorem. 

\begin{theorem}\label{th:rel_err}
    Let $f:\mathbb{R}^n\rightarrow\mathbb{R}$ be twice continuously
    differentiable. Let $x^* \in \bbR^n$ be a minimizer for $f$, and assume
    that $H(x^*)$ is nonsingular.
    Let $H$ be Lipschitz continuous with Lipschitz constant $L_H$, as defined in~\zcref{def:lip}, in an open
    neighborhood $\Omega$ of $x^*$. 
    Let $\{\xh_{i}\}$ be the sequence generated by mixed precision Newton's method under \zcref{def:err_model_newton}. If, at some iteration $i$,
    \begin{equation} \label{def:nu_i_thm}
        \nu_i := \eh \kappa(H(\xh_i)) < 1, 
    \end{equation}
    then 
    \begin{equation} \label{thes2.1}
    \norm{ \xh_{i+1} - x^*} \leq \alpha_i\norm{\xh_i - x^*}^2 + \beta_i\norm{\xh_i - x^*} + \limacc,
    \end{equation}
    where 
    \begin{align}
        \alpha_i &:= \frac{1+\ei}{2(1-\nu_i)}L_H\norm{ \Hi^{-1} }, \label{def:alpha_i_thm} \\
        \beta_i &:= \frac{(\eh + \ei)}{1-\nu_i}\kappa(\Hi) + \ei, \label{def:beta_i_thm} \\
        \limacc &:= \frac{1+\ei}{1-\nu_i}\eg \norm{ \Hi^{-1}} + \ei \norm{ x^*}, \label{def:limacc_thm}
    \end{align}
    Moreover, if \zcref{def:nu_i_thm} holds for all $i$ and 
    \begin{equation} \label{def:theta_i_thm}
        \theta_i := \alpha_i\norm{\xh_i - x^*} + \beta_i < \theta_{\max}
    \end{equation}
    for  $\theta_{\max} \in [0, 1)$, then there exists $\rho > 0$ such that if $\xh_0 \in B_{\rho}(x^*)$, the sequence $\{\xh_{i}\}$ is well defined and satisfies \zcref{thes2.1} for all $i$ until 
    \begin{equation}
        \norm{\xh_i - x^*} < \frac{\limacc}{1-\theta_{\max}}.
    \end{equation}
\end{theorem}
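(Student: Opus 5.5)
We prove the one-step bound \zcref{thes2.1} by decomposing the computed iterate error into an exact Newton part, a perturbed-linear-system part, and an update-rounding part. Then we derive the convergence claim by induction, using the contraction factor $\theta_{\max}$.

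\textbf{Step 1 (perturbed Hessian).} Write $H_i := \Hi$ and $\widetilde H_i := H_i + \errh$. Since $\norm{H_i^{-1}\errh}\le \eh\kappa(H_i) = \nu_i <1$, the Banach lemma shows that $\widetilde H_i$ is nonsingular, with $\norm{\widetilde H_i^{-1}}\le \norm{H_i^{-1}}/(1-\nu_i)$. This also makes $\dhat_i$ well defined.

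\textbf{Step 2 (error of the direction step).} Set $y_i := \xh_i + \dhat_i$. We use $g(x^*)=0$ and write
\begin{equation*}
y_i - x^* = \widetilde H_i^{-1}\bigl[\widetilde H_i(\xh_i - x^*) - \gi - \errg\bigr]
= \widetilde H_i^{-1}\bigl[H_i(\xh_i-x^*) - (\gi - g(x^*)) + \errh(\xh_i - x^*) - \errg\bigr].
\end{equation*}
By Taylor's theorem in integral form and the Lipschitz continuity of $H$ on $\Omega$ (assuming $\xh_i\in\Omega$ and $\Omega$ convex near $x^*$, e.g.\ a ball),
\begin{equation*}
\norm{H_i(\xh_i-x^*) - (\gi-g(x^*))}\le \tfrac{L_H}{2}\norm{\xh_i-x^*}^2 .
\end{equation*}
Also $\norm{\errh(\xh_i-x^*)}\le \eh\norm{H_i}\norm{\xh_i-x^*}$ and $\norm{\errg}\le\eg$. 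Multiplying by the bound on $\norm{\widetilde H_i^{-1}}$ gives
\begin{equation*}
\norm{y_i-x^*}\le \frac{\norm{H_i^{-1}}}{1-\nu_i}\Bigl(\tfrac{L_H}{2}\norm{\xh_i-x^*}^2 + \eg\Bigr) + \frac{\eh\kappa(H_i)}{1-\nu_i}\norm{\xh_i-x^*}.
\end{equation*}

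\textbf{Step 3 (update error).} We have $\xh_{i+1}-x^* = (y_i - x^*) + \errsum$. The key is to bound $\norm{\xh_i}+\norm{\dhat_i}$ in terms of $y_i$ so as to reproduce the constants $(1+\ei)$ and $\ei\kappa(H_i)/(1-\nu_i)$. The natural decomposition is $\norm{\xh_i}\le \norm{x^*}+\norm{\xh_i-x^*}$ and $\norm{\dhat_i}\le \norm{y_i-x^*}+\norm{\xh_i-x^*}$, but this would give $2\ei\norm{\xh_i-x^*}$ rather than the stated coefficient. To match \zcref{def:beta_i_thm} exactly, I would instead write
\begin{equation*}
\dhat_i = -\widetilde H_i^{-1}\widetilde H_i(\xh_i-x^*) + (y_i - x^*)\quad\text{with}\quad \norm{\dhat_i}\le \frac{\norm{H_i^{-1}}}{1-\nu_i}\bigl(\norm{\gi}+\eg\bigr),
\end{equation*}
and use $\norm{\gi}\le \norm{H_i}\norm{\xh_i-x^*}+\frac{L_H}2\norm{\xh_i-x^*}^2$. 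This yields $\ei\kappa(H_i)/(1-\nu_i)$ linearly, $\ei$ times the quadratic and $\eg$ terms, plus $\ei(\norm{x^*}+\norm{\xh_i-x^*})$. Combining with Step 2 should give $\alpha_i,\beta_i,\limacc$ exactly. I expect this bookkeeping, choosing the split that reproduces the paper's constants, to be the main obstacle; the inequalities themselves are routine.

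\textbf{Step 4 (induction).} Assume \zcref{def:nu_i_thm} and \zcref{def:theta_i_thm} hold for all $i$. Choose $\rho$ so that $B_\rho(x^*)\subseteq\Omega$ and, additionally, $\limacc/(1-\theta_{\max})$ is small relative to $\rho$ (implicitly a smallness condition on the $\epsilon$'s, which the statement tacitly requires). Then whenever $\norm{\xh_i-x^*}\ge \limacc/(1-\theta_{\max})$ we get
\begin{equation*}
\norm{\xh_{i+1}-x^*}\le \theta_i\norm{\xh_i-x^*}+\limacc < \theta_{\max}\norm{\xh_i-x^*}+(1-\theta_{\max})\norm{\xh_i-x^*} = \norm{\xh_i-x^*}.
\end{equation*}
So the iterates remain in $B_\rho(x^*)\subseteq\Omega$, Steps 1–3 apply at each iteration, the sequence is well defined, and \zcref{thes2.1} holds until the threshold is reached.
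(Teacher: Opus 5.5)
Your proposal is correct and follows essentially the paper's own route. It uses the same splitting of $\xh_{i+1}-x^*$ into $(\Hi+\errh)^{-1}\errh(\xh_i-x^*)$, $-(\Hi+\errh)^{-1}(\gi-\Hi(\xh_i-x^*)+\errg)$ and $\errsum$, the same bound $\norm{\dhat_i}\le\norm{\Hi^{-1}}(\norm{\gi}+\eg)/(1-\nu_i)$ together with $\norm{\gi}\le\norm{\Hi}\norm{\xh_i-x^*}+\tfrac{L_H}{2}\norm{\xh_i-x^*}^2$, and the same monotone-decrease argument that keeps the iterates in $B_\rho(x^*)$. As side remarks: the ``natural'' split you discarded in Step~3 would also suffice, since it gives the same $\alpha_i$ and $\limacc$ with linear coefficient $(1+\ei)\nu_i/(1-\nu_i)+2\ei=\beta_i-\ei(\kappa(\Hi)-1)/(1-\nu_i)\le\beta_i$; and the extra smallness requirement on $\limacc/(1-\theta_{\max})$ in Step~4 is never used.
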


\begin{proof}
See~\zcref[]{sec:proof_rel_err}. 
\end{proof}

\zcref{th:rel_err} shows that the error decreases until the first
iteration for which it becomes smaller than $\gamma_i/(1-\theta_{\max})$. Assuming $\theta_{\max}$ is sufficiently less than $1$, which is satisfied when $L_H\norm{ \Hi^{-1} }\norm{\xh_i - x^*}$ and $\nu_i$ are safely below one, 
this means that the quality of the possible approximation of $x^*$ is mainly determined by
$\limacc$,
which we call the \textit{limiting accuracy}.
To first order,
\begin{align} 
    \theta_i &\approx (\eh + \ei)\kappa(\Hi) + \frac{1}{2}L_H\norm{ \Hi^{-1} } \norm{ \xh_i - x^*}  + \ei, \label{eq:theta_approx} \\
    \limacc &\approx \eg \norm{ \Hi^{-1}} + \ei \norm{ x^*}. \label{eq:limacc_approx}
\end{align}
Hence, the limiting accuracy depends, to first order, on the gradient error $\eg$ and on the working precision $\ei$, but not
on the Hessian error $\eh$. This shows that we may tolerate
some errors in forming the Hessian and solving the associated linear system without
impacting the final solution accuracy. Moreover, since $\eg$ 
depends on the error in evaluating the gradient, which may be large, 
and since $\eg$ is multiplied by $\norm{\Hi^{-1}}$, \zcref{eq:limacc_approx} suggests that the error incurred in the gradient evaluation
is the one that impacts the limiting accuracy the most. Therefore,
if we wish to obtain the highest possible solution quality, the gradient
should be evaluated as accurately as possible.
Finally, note that the behavior of $\limacc$ over the iterations is not necessarily monotone, especially due to
$\eg$, which can change over time depending on the conditioning of the gradient at the current iterate $\xh_i$.

Let us now turn to the convergence rate of the method. Inequality
\zcref{thes2.1} shows that it contains both a quadratic term $\alpha_i$ 
(see \zcref{def:alpha_i_thm})
and a linear term $\beta_i$ (see \zcref{def:beta_i_thm}).
To first order,  these terms behave as
\begin{equation} \label{eq:alpha_beta_approx}
    \alpha_i \approx \frac{1}{2}L_H\norm{ \Hi^{-1}}, \quad \beta_i \approx \eh \kappa(\Hi).
\end{equation}
First, in exact arithmetic, and in absence of approximations on the gradient and the Hessian, we have $\ei = \eg = \eh = \nu_i = 0$ for all $i$,
and so $\beta_i = \gamma_i=0$. Using the same argument as in~\cite[Thm.~3.5]{nocedalWright}, we have that, if $\xh_i$ is close enough to the solution, $\norm{\Hi^{-1}} \leq 2\norm{\Hstar^{-1}}$, recovering the standard Newton's quadratic convergence, under the same assumptions.
Moreover, even in presence of errors, Newton's method may still converge quadratically
if the linear term $\beta_i$ is small compared with the quadratic one $\alpha_i$; 
conversely, if $\beta_i$ becomes dominant, then this will
deteriorate the convergence to a linear rate, or may even prevent convergence if $\beta_i \ge 1$.
Importantly, $\beta_i$ depends on the Hessian error, which shows
that forming and solving the Hessian system approximately will affect the convergence rate of the method. Finally,
in order to preserve convergence, the level of error $\eh$ introduced in the Hessian should be chosen to be inversely proportional
to the condition number of the Hessian $\kappa(\Hi)$: the more ill-conditioned the Hessian, the smaller the tolerated error.

\subsection{Analogous result for the gradient norm convergence}

We conclude this section by proving an analogous result for the gradient norm.

\begin{theorem} \label{th:grad_norm}
    Let $f:\mathbb{R}^n\rightarrow\mathbb{R}$ be twice continuously
    differentiable. Let $x^* \in \bbR^n$ be a minimizer for $f$, and assume
    that $H(x^*)$ is nonsingular. Let $H$ be Lipschitz continuous with Lipschitz
    constant $L_H$, as defined in~\zcref{def:lip}, in an open neighborhood
    $\Omega$ of $x^*$. Let $\{\xh_{i}\}$ be the sequence generated by mixed
    precision Newton's method under \zcref{def:err_model_newton}. If \zcref{def:nu_i_thm} holds for some iteration $i$, 
    then
    \begin{equation} \label{thes2.2}
        \norm{\ginext} \leq \phi_i\norm{\gi} + \limg,
    \end{equation}
    where
    \begin{align}
        \phi_i &:= \frac{1}{1-\nu_i} \Bigl((\eh + \ei) \kappa(\Hi) + \frac{1+\ei}{2}\big((1 + \theta_i)\mu_i + \tau_i\big)\Bigr) \label{def:phi_i_thm} \\
        \limg &:= \eg \Bigl( 1 + \tau_i + \frac{1}{1- \nu_i}\bigl( (\eh + \ei) \kappa(\Hi) + (1+\ei)(1+\theta_i)\mu_i/2 \bigr)\Bigr)  \nonumber \\
        &\qquad + \ei \norm{\Hi}\norm{\xh_i}\bigl(1 + \tau_i/2 + (1 + \theta_i)\mu_i/2\bigr), \label{def:limg_thm}
    \end{align}
    where $\mu_i := L_H\norm{\Hi^{-1}}\norm{\xh_i - x^*}$, $\tau_i := L_H
    \norm{\Hi^{-1}} \limacc$, and $\limacc$ is defined
    in~\zcref{def:limacc_thm}. 

    Moreover, if \zcref{def:nu_i_thm} holds for all $i$ and $\phi_i <
   \phi_{\max} \in [0, 1)$, then there exists $\rho > 0$ such that if $\xh_0 \in
   B_{\rho}(x^*)$, the sequence $\{g(\xh_{i})\}$ is well defined and satisfies
   \zcref{thes2.2} for all $i$ until
    \begin{equation}
        \norm{\gi} < \frac{\limg}{1-\phi_{\max}}.
    \end{equation}
\end{theorem}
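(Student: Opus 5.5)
The strategy is to write $g(\xh_{i+1})$ as a Taylor expansion around $\xh_i$ and reuse the step bounds produced in the proof of \zcref{th:rel_err}. Set $d := \xh_{i+1}-\xh_i = \dhat_i + \errsum$. By the integral form of Taylor's theorem and Lipschitz continuity of $H$ in $\Omega$,
\[
  g(\xh_{i+1}) = g(\xh_i) + \Hi d + r_i, \qquad \norm{r_i}\le \tfrac{L_H}{2}\norm{d}^2 .
\]
From \zcref{eq:d} we have $\Hi\dhat_i = -\gi - \errg - \errh\dhat_i$. Hence
\[
  g(\xh_{i+1}) = -\errg - \errh\dhat_i + \Hi\errsum + r_i .
\]
The proof then bounds these four terms one at a time.

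First we bound $\dhat_i$. Since $\nu_i<1$, a Neumann series argument gives $\norm{(\Hi+\errh)^{-1}}\le \norm{\Hi^{-1}}/(1-\nu_i)$. Therefore $\norm{\dhat_i}\le \norm{\Hi^{-1}}(\norm{\gi}+\eg)/(1-\nu_i)$, and so
\[
  \norm{\errh\dhat_i}\le \frac{\eh\kappa(\Hi)}{1-\nu_i}(\norm{\gi}+\eg).
\]
For the update term, \zcref{E^+} gives $\norm{\Hi\errsum}\le \ei\norm{\Hi}\norm{\xh_i} + \ei\norm{\Hi}\norm{\dhat_i}$. The second piece yields $\frac{\ei\kappa(\Hi)}{1-\nu_i}(\norm{\gi}+\eg)$. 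This accounts for the $(\eh+\ei)\kappa(\Hi)/(1-\nu_i)$ parts of $\phi_i$ and $\limg$, together with the $\eg\cdot 1$ and the $\ei\norm{\Hi}\norm{\xh_i}\cdot 1$ contributions.

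The delicate step is the quadratic remainder $\tfrac{L_H}{2}\norm{d}^2$. It must be split as a product of two factors, each bounded differently, so that it comes out linear in $\norm{\gi}$ with coefficients $\mu_i$, $\tau_i$ and $\theta_i$. I will write $\norm{d}\le \norm{\xh_{i+1}-x^*} + \norm{\xh_i - x^*}$ and apply \zcref{thes2.1}, which gives $\norm{\xh_{i+1}-x^*}\le \theta_i\norm{\xh_i-x^*}+\limacc$. Thus
\[
  \norm{d}\le (1+\theta_i)\norm{\xh_i-x^*}+\limacc .
\]
Multiplying by $L_H\norm{\Hi^{-1}}$ turns this into $(1+\theta_i)\mu_i+\tau_i$. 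For the other factor I will use $\norm{d}\le (1+\ei)\norm{\dhat_i} + \ei\norm{\xh_i}$. Combined with the bound on $\norm{\dhat_i}$, this gives
\[
  \norm{d}\le \frac{(1+\ei)\norm{\Hi^{-1}}}{1-\nu_i}(\norm{\gi}+\eg) + \ei\norm{\xh_i}.
\]
The product is then
\[
  \tfrac{L_H}{2}\norm{d}^2 \le \tfrac12\bigl((1+\theta_i)\mu_i+\tau_i\bigr)\Bigl(\tfrac{1+\ei}{1-\nu_i}(\norm{\gi}+\eg) + \ei\norm{\Hi}\norm{\xh_i}\Bigr),
\]
using $\norm{\Hi^{-1}}^{-1}\le\norm{\Hi}$ on the last piece. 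This produces the $\frac{1+\ei}{2}((1+\theta_i)\mu_i+\tau_i)/(1-\nu_i)$ term in $\phi_i$ and the matching $\eg$ and $\ei\norm{\Hi}\norm{\xh_i}$ terms in $\limg$. Regrouping the $\tau_i$ contributions to the $\eg$ coefficient, namely $\tfrac{1+\ei}{2(1-\nu_i)}\tau_i$, which I will bound loosely by $\tau_i$, should give exactly \zcref{def:limg_thm}. The main obstacle is this bookkeeping, i.e.\ choosing which factor receives which estimate so that the constants match. I will also check that $\xh_{i+1}\in\Omega$ so that the Taylor bound is valid.

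For the second part, I follow the end of the proof of \zcref{th:rel_err}. By continuity of $g$ and since $g(x^*)=0$, I choose $\rho$ so small that the iterates remain in $\Omega$ (using that $\theta_i<1$ by hypothesis) and $\nu_i<1$ holds. Then $\phi_i<\phi_{\max}$ gives $\norm{\ginext}\le\phi_{\max}\norm{\gi}+\limg$. Whenever $\norm{\gi}\ge\limg/(1-\phi_{\max})$, this inequality yields $\norm{\ginext}\le\norm{\gi}$, so the gradient norm decreases until the stated threshold is reached.
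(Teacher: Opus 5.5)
Your route is the paper's route: the same identity $\ginext=-\errg-\errh\dhat_i+\Hi\errsum+r_i$, the same bound on $\norm{\dhat_i}$, and the same device of multiplying two different bounds on $\norm{\xh_{i+1}-\xh_i}$ (one from the step, one from \zcref{th:rel_err}).

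The gap is the step where you bound $\frac{1+\ei}{2(1-\nu_i)}\tau_i$ ``loosely'' by $\tau_i$. That requires $1+\ei\le 2(1-\nu_i)$, i.e.\ $\nu_i\le(1-\ei)/2$. This does not follow from $\nu_i<1$, which is the only hypothesis of the first part. The paper's proof makes the same substitution without comment: the fourth term of \zcref{der3.9} carries the factor $\frac{1+\ei}{2(1-\nu_i)}$, and that factor disappears in \zcref{eq:def_limg}. In fact the first claim is false as stated.

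Take $n=1$ and $f(x)=\frac{x^2}{2}+\frac{c\abs{x}^3}{6}$ with $c>0$. Then $g(x)=x+\frac{c}{2}x\abs{x}$, $H(x)=1+c\abs{x}$, $L_H=c$ and $x^*=0$. Let $\xh_i=0$, $\ei=0$, $\errg=-\eg$, and $\errh=-\eh$ with $\eh=\nu_i\in(\frac12,1)$. Then $\xh_{i+1}=\dhat_i=\delta:=\eg/(1-\nu_i)$, and
\[
\norm{\ginext}=\delta+\frac{c}{2}\delta^2=\frac{\eg}{1-\nu_i}+\frac{c\,\eg^2}{2(1-\nu_i)^2}.
\]
Here $\gi=0$, $\mu_i=0$, $\limacc=\delta$ and $\tau_i=c\delta$. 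So the claimed right-hand side is
\[
\limg=\eg\Bigl(1+\tau_i+\frac{\nu_i}{1-\nu_i}\Bigr)=\frac{\eg}{1-\nu_i}+\frac{c\,\eg^2}{1-\nu_i},
\]
which is strictly smaller than $\norm{\ginext}$ because $\frac{1}{2(1-\nu_i)}>1$.

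What your computation actually proves is \zcref{thes2.2} with $\eg\tau_i$ replaced by $\frac{1+\ei}{2(1-\nu_i)}\eg\tau_i$; in the example above this corrected bound holds with equality. The stated $\limg$ can be recovered under a condition the second part already supplies. Since $\phi_i\ge(\nu_i+\ei\kappa(\Hi))/(1-\nu_i)$, the condition $\phi_i\le 1$ forces $1+\ei\le 1+\ei\kappa(\Hi)\le 2(1-\nu_i)$. So your loose bound is legitimate whenever $\phi_i<\phi_{\max}<1$, and the second statement survives as written.

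Two smaller points in that second part:
\begin{itemize}
\item $\theta_i<1$ is not a hypothesis of this theorem. From the definitions you only get $\theta_i\le\phi_i+\ei$, and that is what you should use.
\item The Taylor remainder needs $\xh_i,\xh_{i+1}\in\Omega$, and this cannot be checked from $\nu_i<1$ alone. It has to come from the iterates staying near $x^*$, which is an assumption the paper also leaves implicit.
\end{itemize}
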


\begin{proof}
See~\zcref{sec:proof_grad_norm}. 
\end{proof}

As for~\zcref{th:rel_err}, the gradient norm decreases until the first iteration for which it becomes smaller than $\limg/(1-\phi_{\max})$. To first order, $\phi_i$ and $\limg$ behave as
\begin{align}
    \phi_i &\approx \frac{1}{2}L_H\norm{ \Hi^{-1}}\big(\norm{ \xh_i -x^*}+\limacc\big) + (\ei + \eh)\kappa(\Hstar), \label{eq:phi_i_approx}\\
    \limg &\approx \eg + \ei\norm{ H(\xh_i)} \norm{ \xh_i}.\label{eq:limg_approx}
\end{align}

The assumptions of this second theorem are stronger than the ones
of~\zcref{th:rel_err}, since they also require a bound on the term $\limacc$.
This assumption is necessary to ensure that the absolute error on the
solution decreases enough to show a decrease in the gradient norm too. Moreover,  in
classical Newton's method, the gradient norm decreases quadratically. In
this theorem the term responsible for the quadratic convergence is hidden in $\phi_i$,
which contains a factor $\norm{ \xh_i - x^*}$, which multiplies $\|\gi\|$. 
    
Additionally, we see that the error $\eg$ on the gradient evaluation impacts not only $\limg$, but also the convergence rate $\phi_i$ of the gradient norm, through the term $\tau_i$.
This contrasts with \zcref{th:rel_err}, in which $\eg$ does not affect the convergence rate of the error on the solution $\norm{\xh_i - x^*}$, but only its limiting accuracy.

\section{Newton's approximations}\label{sec:n_approx}

\zcref{def:err_model_newton} is quite general and encompasses
different sources of errors. First of all, it covers the rounding errors
arising from the use of finite precision floating-point arithmetic;
we consider these errors in~\zcref{sec:mp_n}. However, it can account for
more general sources of errors, typically arising from
approximations introduced to make classical Newton's method more suitable for large-scale problems.
Specifically, we consider inexact Newton (\zcref{sec:inexact_newton})
and Gauss--Newton (\zcref{sec:gauss_newton}) methods; for both of these variants,
we not only discuss the errors introduced by their approximations,
but also their interplay with rounding errors, that is, we consider mixed precision
inexact Newton and Gauss--Newton in floating-point arithmetic.

\subsection{Floating-point Newton} \label{sec:mp_n}

In any floating-point arithmetic compliant with the IEEE 754 standard~\cite{IEEE754-2019}, 
the elementary operations satisfy the following model~\cite[sect.~2.2]{highamAccuracy}: 
\begin{equation}\label{FPAmodel}
  \fl(a\op b) = (a\op b)(1+\delta),\quad \abs{\delta}\le u,\quad \op\in\{+, -, \times, /\},
\end{equation}
where $u$ is the unit roundoff of the precision used and $\fl(\cdot)$ represents the results computed 
in floating-point arithmetic.
Hence, floating-point arithmetic introduces relative errors proportional
to the unit roundoff of the arithmetic.

In our context, we consider a mixed precision approach for Newton's method,
outlined in~\zcref{alg:base_mp_newton}, 
which uses three floating-point arithmetics with
different unit roundoffs:
\begin{itemize}
\item $u$ is the unit roundoff of the working precision, used for storing and updating the iterates;
\item $u_g$ is the unit roundoff of the arithmetic used for evaluating the gradient;
\item $u_H$ is the unit roundoff of the arithmetic used for forming and solving the Hessian system.
\end{itemize}

\begin{algorithm}[ht!]
\caption{Mixed precision Newton}
\label{alg:base_mp_newton}
\hspace*{\algorithmicindent} \textbf{Input}: initial guess $x_0$, Hessian $H$, gradient $g$ \\ 
\hspace*{\algorithmicindent} \textbf{Output}: an approximation $x_{i+1}$ to the minimizer $x^*$
\begin{algorithmic}[1]
\For{$i = 0, 1, \ldots$ until convergence}
    \State Compute $g_i = g(x_i)$      \tabto{4cm} in precision with unit roundoff $u_g$
    \State Solve $H(x_i) d_i = -g_i$   \tabto{4cm} in precision with unit roundoff $u_H$ \label{line.Hessian}
    \State Update $x_{i+1} =x_i + d_i$ \tabto{4cm} in precision with unit roundoff $u$
\EndFor
\State \textbf{return} $x_{i+1}$
\end{algorithmic}
\end{algorithm}

Let us now discuss how the unit roundoffs $u$, $u_H$, $u_g$ relate to the corresponding
error terms $\ei$, $\eh$, $\eg$ in~\zcref{def:err_model_newton}.
By~\zcref{FPAmodel}, we readily have $\ei = u$.
For the Hessian system, a backward stable solver 
will deliver an error $\eh$ of order $u_H$; for example, for a direct solver based on
Cholesky factorization, $\eh = O(n^2)u_H$~\cite[eq.~(10.7)]{highamAccuracy},
where the dimensional constant in $O(n^2)$ is known to be pessimistic~\cite{hima19a}.
Finally, $\eg$ will be a (potentially large) multiple of $u_g$, but its precise 
value is very much dependent on the expression of the gradient,
the point at which it is evaluated, and the method of evaluation.
We will discuss in~\zcref{subsec:experimental_setup} how to measure
these errors in practice.

Given the discussion in the previous section,
the setting of interest
is $u_g \le u \le u_H$: we consider the use of a potentially
higher precision to evaluate the gradient (to improve the limiting accuracy)
and of a potentially lower precision to form and solve the Hessian system (to
reduce the computational cost, while preserving high limiting accuracy, at the price of
potentially deteriorating the convergence rate).

\subsection{Inexact Newton} \label{sec:inexact_newton}

Newton's method requires solving a linear system of the form $\Hi\dhat_i = -\gi$ at each iteration.
For large-scale problems, solving this system exactly with a 
direct method can be quite expensive. Instead, inexact Newton's methods~\cite[chap.~7.1]{nocedalWright} solve this system 
approximately by an iterative solver
 such as the conjugate gradient (CG) method~\cite{hestenes1952cgls}.
The standard criterion to stop such an iterative method is to stop whenever the computed $\dhat_i$
satisfies, for given tolerances $0\leq \eta_i<1$,
\begin{equation} \label{stopping}
\norm{\Hi\dhat_i+\gi}\leq \eta_i \norm{\gi}.
\end{equation}

Our framework can be applied to inexact Newton's method using
the Rigal--Gaches theorem~\cite[Thm.~7.1]{highamAccuracy}, which shows that the following two statements are equivalent~\cite[eq.~(17.33b)]{highamAccuracy}:
\begin{enumerate}
\item $\exists \errh : (\Hi+\errh)\dhat_i = -\gi, \quad \norm{\errh} \le \eh\|\Hi\|$;
\item $\norm{\Hi\dhat_i + \gi} \le \eh\norm{\Hi}\norm{\dhat_i}$.
\end{enumerate}
Hence inexact Newton's method 
in exact arithmetic and with no approximations on the gradient  
satisfies \zcref{def:err_model_newton}
with 
\begin{equation} \label{def:ki}
\eh = \eta_i \frac{\norm{\gi}}{\norm{\Hi}\norm{\dhat_i}} =: \frac{\eta_i}{\ki},
\end{equation}
where $\ki = \norm{\Hi}\norm{\dhat_i}/\norm{\gi}$ satisfies
\begin{equation}
\frac{1}{1+\eh} \le \ki \le \frac{\kappa(\Hi)}{1-\eh \kappa(\Hi)}.
\end{equation}

In the regime where, due to the inexactness in the linear system solution, the term $\beta_i$ dominates over the term $\alpha_i$, 
\zcref{th:rel_err} proves a convergence rate of the form
\begin{equation}\label{eq:rateINours}
    \begin{aligned}
        &\norm{ \xh_{i+1} - x^*} \le \beta_i \norm{ \xh_i -x^* }, \\
        &\beta_i \approx \eh\kappa(\Hi) = \eta_i \frac{\kappa(\Hi)}{\ki}.
    \end{aligned}
\end{equation}
This can be compared with the standard convergence theory of inexact Newton's
method, as in~\cite{dembo1982inexactnewton}, \cite[chap.~7.1]{nocedalWright}, where a convergence rate is proved in the energy norm induced by $H(x^*)^2$ with $\beta_i = \eta_\mathrm{max}$
for any $\eta_\mathrm{max}$ such that $\forall i, \eta_i < \eta_\mathrm{max}$,
which implies $\norm{ \xh_{i+1} - x^*} \le \eta_{\max} \kappa(\Hi)\norm{ \xh_i -x^* }$.

Exploiting the generality of our framework,  we can combine inexact Newton's method
with mixed precision floating-point arithmetic. This amounts to modifying
\zcref{alg:base_mp_newton} so that the Hessian system on line~\ref{line.Hessian}
is solved by an iterative solver with stopping tolerance $\eta_i$. 
Then \zcref{def:err_model_newton} is satisfied with $\eh \leq c_\mathrm{solver} u_H + \eta_i/\ki$,
where the constant $c_\mathrm{solver}$ depends on the specifics of the iterative
solver used; for CG, $c_\mathrm{solver}=O(nk^2)$, where $k$ is
the number of iterations~\cite{bake2025forwardbackwarderrorbounds}.
This gives some indication on how to choose $\eta_i$ based on the unit roundoff $u_H$
of the arithmetic used to solve the Hessian system (or vice versa): in order to equilibrate both
sources of inexactness, we should set $\eta_i \approx \ki c_\mathrm{solver} u_H$.
We will illustrate this rule of thumb experimentally in \zcref{subsec:in_exp}.

\subsection{Gauss--Newton} \label{sec:gauss_newton}

Our error model~\zcref{def:err_model_newton} can potentially encompass
quasi-Newton methods. Indeed, the term $\errh$ can be used to represent the errors
arising from the approximation of the Hessian matrix.  In this section, we
present as an example the Gauss--Newton method, a quasi-Newton method
specifically designed for nonlinear least-squares problems, that is, problems of
the form
\[
\text{minimize } f(x) \text{ w.r.t. } x \in \bbR^n\text{, with } f(x)=\frac{1}{2}\norm{r(x)}^2,
\]
where $r: \bbR^n \to \bbR^m$ is referred to as the \textit{residual}.
Exploiting the problem's structure, the method builds a Hessian approximation
only using first-order information of $r$ \cite{dennis_schnabel}. In fact, the gradient of 
$f$ can be expressed as $g(x) = J(x)^Tr(x)$, with  $J: \bbR^n \to \bbR^{m
\times n}$ the Jacobian matrix of $r$, and  the second-order derivatives as
\begin{equation}
    H(x) = J(x)^TJ(x) + S(x), \quad 
    S(x) = \sum_{i=0}^{m-1} r_i(x) \nabla^2 r_i(x),
\end{equation}
where $r_i(x)$ 
is the $i$th component of the residual $r(x)$. The Gauss--Newton method
approximates the Hessian $H(x)$ with $J(x)^TJ(x)$, thus neglecting the term
$S(x)$, which contains the second-order derivatives of the residual. Its iterations thus read:
\begin{equation} \label{GN_step}
    \begin{aligned}
\text{solve } \;\Ji^T \Ji \dhat_i &= -\Ji^T r(\xh_i), \\
\widehat{x}_{i+1} &=\xh_i+\dhat_i.
    \end{aligned}
\end{equation}

The convergence of the Gauss--Newton method depends on the relative importance
of the discarded term $\Si$ with respect to $\Ji^T\Ji$. If this term is negligible,
we can recover the fast quadratic convergence of Newton's method, but, if it
is not, the convergence can degrade to a linear one or the method may not converge at all.
This result, for exact arithmetic, can for instance be found 
in~\cite[Thm.~10.2.1]{dennis_schnabel}. 

It is possible to apply \zcref{def:err_model_newton} to the
Gauss--Newton method by interpreting the discarded term $\Si$ as a perturbation
matrix $\errh$.
Defining $\errh = -\Si$, we have $\eh = \norm{\Si}/\norm{\Hi}$
and \zcref{th:rel_err} applies with $\alpha_i\approx L_H \|\Hi^{-1}\|$ and $\beta_i \approx  \eh\kappa(\Hi) = \norm{\Si}\norm{\Hi^{-1}}$. 
We can compare this convergence result to the one in \cite[Thm.~10.2.1]{dennis_schnabel}. The latter is based on the key assumption that there exists a $\sigma$ such that 
\begin{equation} \label{eq:gn_standard_assumption}
    \norm{\Sstar (\xh_i - x^*)} \leq \sigma \norm{\xh_i - x^*} < \lambda_\mathrm{min}(\Jstar^T\Jstar)\norm{\xh_i - x^*},
\end{equation}
where $\lambda_\mathrm{min}(\cdot)$ denotes the smallest eigenvalue of a matrix.
Since the first inequality is certainly satisfied with $\sigma=\norm{\Sstar}$, a sufficient condition for this assumption to hold
is $\norm{\Sstar} < \lambda_\mathrm{min}(\Jstar^T\Jstar)$.

We can relate this assumption to ours as follows, using the 2-norm, denoted as $\|\cdot\|_2$. Our assumptions in~\zcref{def:err_model_newton} and \zcref{th:rel_err} require
\[
\epsilon_i^H=\frac{\|\Si\|_2}{\|\Hi\|_2} < \frac{1}{\kappa(\Hi)}=\frac{1}{\|\Hi\|_2\|\Hi^{-1}\|_2},
\]
and thus 
\[
\|\Si\|_2 < \frac{1}{\|\Hi^{-1}\|_2}=\; \lambda_{\min}(\Hi). 
\]
By Weyl’s inequality~\cite[Thm.~4.3.1]{horn1985matrixanalysis}, for all $x$ such that $S(x)$ is positive semidefinite, it holds
\[
\lambda_{\min}(H(x))\geq \lambda_{\min}(J(x)^TJ(x))+ \lambda_{\min}(S(x))\geq \lambda_{\min}(J(x)^TJ(x)).
\]
Thus if $S(x^*)$ is positive semidefinite and 
$\|S(x^*)\|_2<\lambda_{\min}(\Jstar^T\Jstar)$, then also $\|S(x^*)\|_2< \lambda_{\min}(H(x^*))$.

Concerning the  convergence rate,  in \cite[Thm.~10.2.1]{dennis_schnabel} we have  for all $i$
\[
 \alpha_i\approx\frac{\|\Jstar\|_2L_J}{\lambda_{\min}(\Jstar^T\Jstar)}, \qquad
 \beta_i\approx \frac{\sigma}{\lambda_{\min}(\Jstar^T\Jstar)},
\]
 assuming $J(x)$ to be $L_J$ -Lipschitz, as defined in~\zcref{def:lip}.
Assuming again $S(x^*)$ to be positive semidefinite, and that it is $L_S$-Lipschitz, our rates satisfy
\begin{align*}
\alpha_i&\approx \frac{L_H}{2} \|\Hi^{-1}\|_2\leq \frac{L_H}{2\lambda_{\min}(\Ji^T\Ji)}\leq \frac{2\sup_x\|J(x)\|_2L_J+L_S}{2\lambda_{\min}(\Ji^T\Ji)},\\
\beta_i&\approx \|\Si\|_2\|\Hi^{-1}\|_2=\frac{\|\Si\|_2}{\lambda_{\min}(\Hi)}\leq\frac{\|\Si\|_2}{ \lambda_{\min}(\Ji^T\Ji)}.
\end{align*}
We thus obtain a convergence rate bound similar to \cite{dennis_schnabel} when $\widehat{x}_i$ approaches $x^*$.

Once again, note that our framework accounts for both the Gauss--Newton
approximation of the Hessian and any other source of inexactness, in particular
the use of mixed precision floating-point arithmetic. We can obtain such a
mixed precision Gauss--Newton method by modifying \zcref{alg:base_mp_newton} by
replacing the Hessian matrix $\Hi$ on line~\ref{line.Hessian} by $\Ji^T\Ji$.
Then \zcref{def:err_model_newton} is satisfied with $\eh = c_\mathrm{solver}
u_H + \norm{\Si}/\norm{\Hi}$, where $c_\mathrm{solver}$ is a constant depending on the method used for solving the Gauss--Newton linear system.  
This shows that Gauss--Newton can be quite resilient
to the use of low precision for the Hessian, since the $u_H$ term will only impact the convergence
rate if it is dominant compared to $\norm{\Si}$. Conversely, Gauss--Newton may
perform just as well as Newton when using low precision if the rounding errors dominate.
We will illustrate this observation experimentally in \zcref{subsec:in_exp}.

\section{Numerical Experiments}\label{sec:numerical_experiments}

In this section, we present numerical experiments to validate our
bounds and illustrate the conclusions that we can draw from them. 
The code used to perform these experiments is
available online\footnote{\url{https://gitlab.inria.fr/gcarrino/mpnewton}}. 
After describing our experimental setting 
in~\zcref{subsec:experimental_setup}, we
focus first on standard Newton's method in~\zcref{subsec:pure_newton},
and then consider the inexact Newton
and Gauss--Newton variants in \zcref{subsec:in_exp} and \zcref{subsec:gn_exp}, 
respectively.

\subsection{Experimental setting} \label{subsec:experimental_setup}

We now describe the setup used throughout this section. We first outline the implementation details, including the floating-point arithmetics and precision combinations considered, and then introduce the test problems used to validate our theoretical bounds.

\subsubsection{Implementation}

The algorithms have been implemented using python, leveraging the NumPy
library to use different floating-point arithmetics. We consider the standard double (fp64) and single (fp32) precisions, 
as well as bfloat16 (abbreviated bf16 in the charts) arithmetic, simulated via the library ml\_dtypes\footnote{\url{https://pypi.org/project/ml-dtypes/}}.
We also use extended precision in order to compute the reference solution $x^*$; we use np.float128, though
that does not provide quadruple precision as the name suggests, but rather an 80-bit ``long double'' precision. We use the notation $u\equiv$ fp32 to indicate that the precision with unit roundoff $u$ has been set to fp32 (for example).

We will consider different
precision combinations, denoted as tuples $(\cdot, \cdot, \cdot)$, indicating,
respectively, the precisions with unit roundoff $u_g$, $u$, and $u_H$ in \zcref{alg:base_mp_newton}.

For almost all the experiments, we plot the convergence history of both the
relative error $\norm{\xh_i - x^*}/\norm{x^*}$ and the gradient norm
$\norm{\gi}$ (always on the left and right part of the figures, respectively).

We also plot the bound on the relative error and the
gradient norm, as derived in~\zcref{th:rel_err} and~\zcref{th:grad_norm} respectively.
We use slightly more transparent, dashed curves for these bounds and only plot them when the required assumptions
are satisfied.
In order to compute these quantities,
we must compute both $\eg$ and $\eh$. For $\eg$,
we compute $\gi$ in precision
np.float128, and set $\eg$ to the norm of the difference between $\gi$ and the gradient
computed in the chosen precision with unit roundoff $u_g$.
The error on the Hessian $\eh$ is computed using the Rigal--Gaches formula~\cite[Thm.~7.1]{highamAccuracy} for the backward error, evaluated in precision np.float128.

\subsubsection{Test problems} \label{subsec:test_problems}

For most experiments, we will use the following two test problems. Note that,
in this section only, $x_k$ refers to the $k$th component of the vector $x$,
not the $k$th iterate of the algorithm.
\begin{itemize}
\item \texttt{ENGVAL1}\footnote{\url{https://vanderbei.princeton.edu/ampl/nlmodels/cute/engval1.mod}}: a standard
minimization problem from the CUTEst dataset~\cite{gratton2024s2mpjcutestoptimizationproblems};
the function to be minimized is
\begin{equation} \label{eq:engval1}
    f(x) = 3+\sum_{k=0}^{n-2} (x_k^2+x_{k+1}^2)^2 -4x_k,
\end{equation}
where $x \in \bbR^n$ and $n=100$ in our setting.
\item \texttt{SINREG}:
a least-squares regression problem on some syntethic vectors of datapoints $z, y \in \bbR^m$, where the function to be minimized is:
\begin{equation} \label{eq:regression_base}
    f(x) = \frac{1}{2}\norm{F(x, z) - y}^2 = \frac{1}{2}\norm{r(x)}^2,
\end{equation}
and $F : \mathbb{R}^{n} \times \mathbb{R}^m \to \mathbb{R}^m$ is a model parametrized by $x \in \mathbb{R}^n$.
In our experiments, we use $m=50$ and 
$F$ defined as
\begin{equation} \label{eq:d_dim_model}
    F(x, z) = x_0 z + \sum_{k=1}^{\left\lceil\frac{n-1}{2}\right\rceil} x_{2k-1} z^{k+1} + \sum_{k=1}^{\left\lfloor\frac{n-1}{2}\right\rfloor}x_{2k} \sin(x_{2k} z),
\end{equation}
where the power in $z^{k+1}$ is applied componentwise.
This test problem has a diagonal Hessian whose condition number can be easily 
controlled. In each set of experiments, we define a reference solution
$\bar{x}^*$ and $y$ is computed as $F(\bar{x}^*,z) + \xi$, where
$\xi \in \bbR^m$ is noise randomly sampled from a
uniform $[-\delta, \delta]$ distribution. The value of $\delta$ is $10^{-1}$, unless otherwise specified.
\end{itemize}
For both problems, since no solution $x^*$ is available beforehand, $x^*$ is set to the solution found by standard
Newton's method in uniform extended precision (that is, with all operations performed in np.float128)
after at most $500$ iterations. 

We will also perform experiments on a wider range of CUTEst
problems in \zcref{subsec:cute}.

\subsection{Standard Newton in mixed precision}
\label{subsec:pure_newton}

In this section we consider standard Newton's method in mixed precision floating-point arithmetic,
as described in \zcref{alg:base_mp_newton}.
In this first set of experiments, we do not want the Hessian system solution to
be affected by any inexactness other than floating-point errors; thus the
linear systems are solved directly using LU
factorization.
Hence, in this setting we have $\eh \approx u_H$.

\subsubsection{Floating-point errors}

We consider the \texttt{SINREG} problem as defined in~\zcref{eq:regression_base}--\zcref{eq:d_dim_model}, because it allows us to study the impact of the condition number of the Hessian. 
Indeed, 
choosing\footnote{We choose the entries of $\bar{x}^*$ to be non-integer decimal numbers in order to avoid its floating-point representation to be exact.} 
$n=4$ and $\bar{x}^* = (1.0123, 2.01234, 1.01231, 2.01234)$, we have $\kappa(\Hstar) \approx 2 \times 10^1$.

\begin{figure}[ht]
\centering
    \begin{minipage}{.5\textwidth}
        \centering
        \includegraphics[width=.95\linewidth]{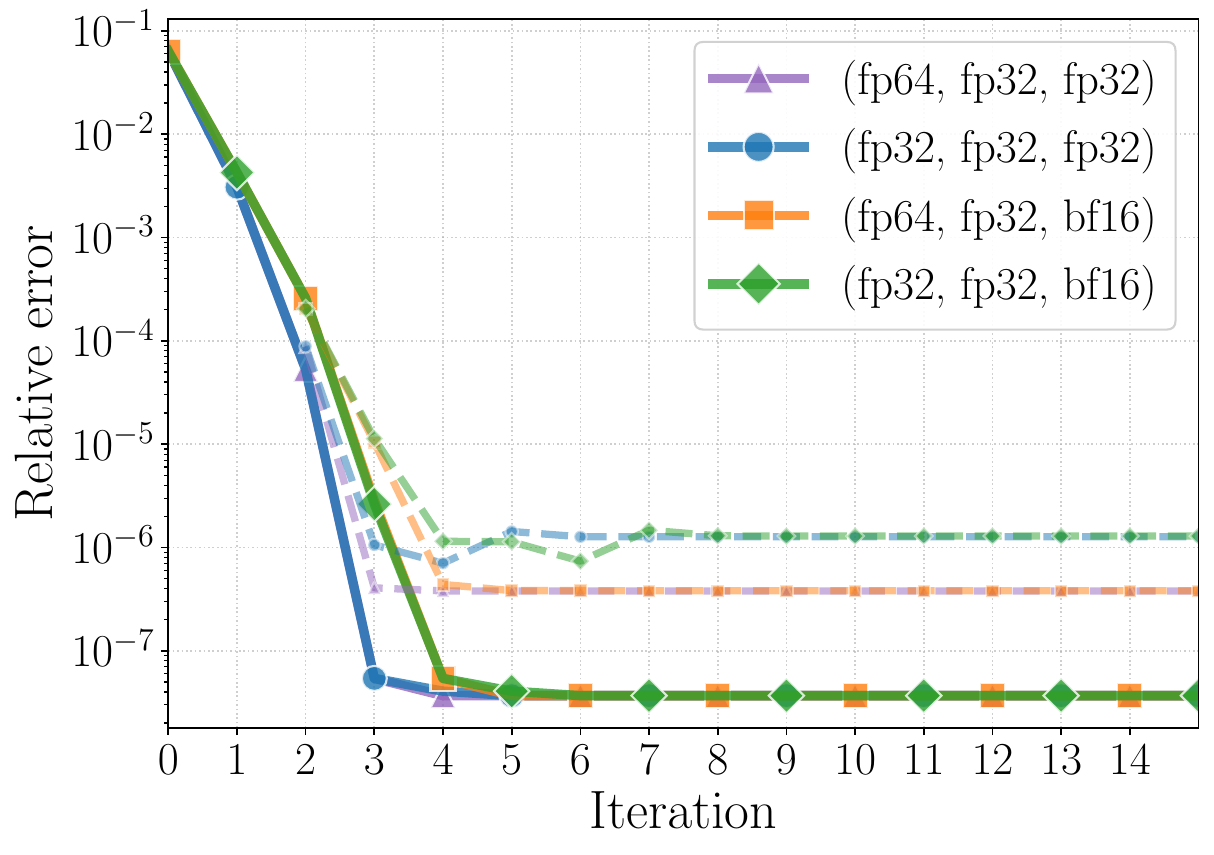}
    \end{minipage}%
    \begin{minipage}{.5\textwidth}
        \centering
        \includegraphics[width=.95\linewidth]{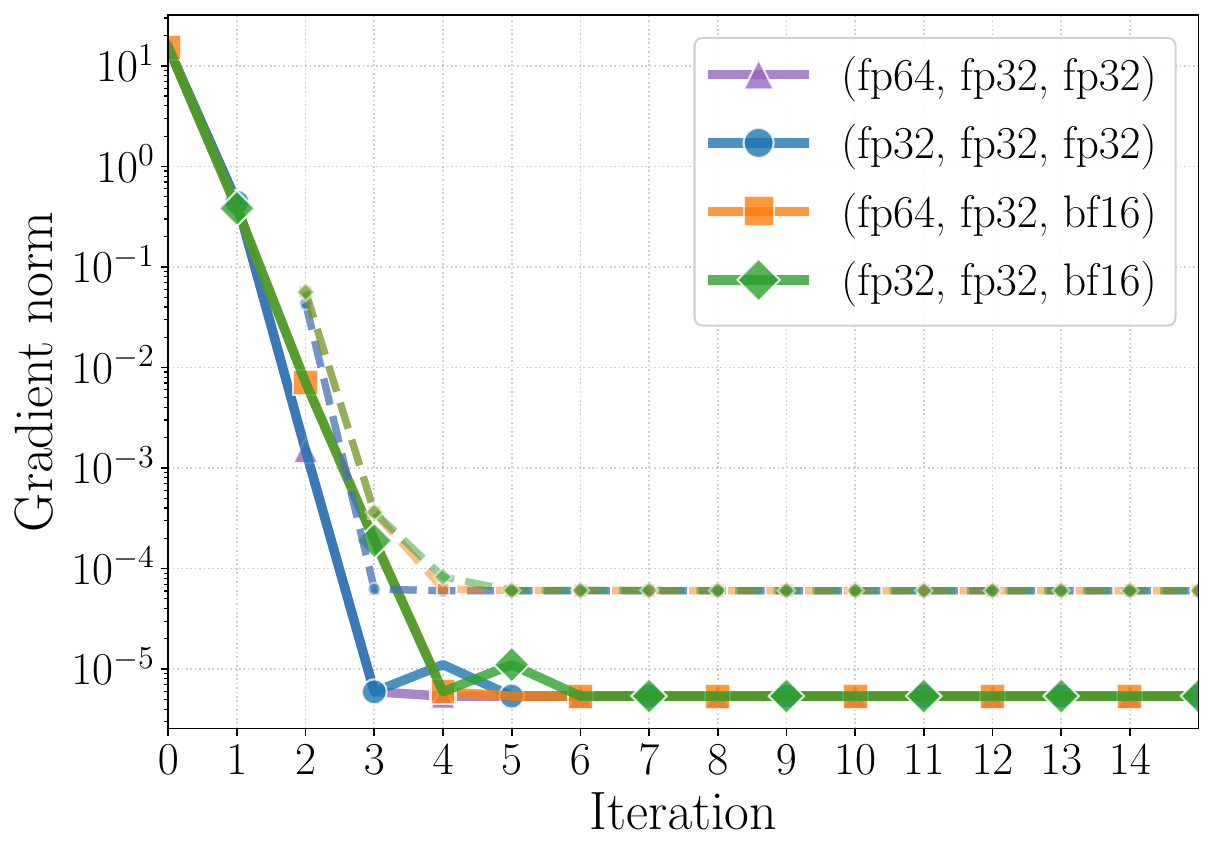}
    \end{minipage}
   \caption{Convergence of mixed precision Newton in relative error (left) and
   gradient norm (right), for different precision sets with unit roundoffs
   $(u_g,u,u_H)$, on the \texttt{SINREG} problem with $n=4$ and $x_0 =
   \bar{x}^* + 10^{-1}$. 
   The purple curves mostly overlap the blue ones, and
   the orange curves mostly overlap the green ones.}
   \label{fig:3}
\end{figure}

\zcref{fig:3} shows the convergence of mixed precision Newton for this problem.
The assumptions of~\zcref{th:rel_err} and~\zcref{th:grad_norm} are easily satisfied in this case, even for low precisions.
Therefore, the method converges for all precision combinations and respects the theoretical bounds.
Moreover, these theoretical bounds (transparent dashed curves) are quite
descriptive as they capture well the actual convergence behavior (solid
curves), for all precision combinations. 

Comparing the purple and blue curves (for which $u_H\equiv$ fp32) with the orange and green
ones (for which $u_H\equiv$ bfloat16), we can see that $u_H$ does not impact the limiting accuracy and
only slightly impacts the convergence rate, since the problem is well conditioned. 

Moreover, the purple and blue curves mostly overlap and, similarly, the orange and green curves also mostly overlap. 
This shows that both the convergence rate and limiting accuracy 
are essentially unchanged whether
we set $u_g\equiv$ fp32 or $u_g\equiv$ fp64. In particular, the attainable
relative error is mainly determined by the working precision $u$ in these charts. 
Further experiments, that we omit for brevity, however confirm that it is actually
regulated by $\max\left(\eg \norm{ H(x^*)^{-1}}, u \norm{ x^*}\right)$, 
as predicted by the theory through $\limacc$ in~\zcref{eq:limacc_approx}. 
In this case, the gradient is computed analytically, its evaluation is not
significantly affected by propagation of rounding errors, and the Hessian
matrix is well conditioned, and so the contribution of $u$ dominates. In this
context, then, we can just use two different precisions,  
computing the gradient in the same precision as the target working precision ($u_g=u$)
and using a lower precision for the Hessian ($u_H \gg u$).

\begin{figure}[ht]
\centering
    \begin{minipage}{.5\textwidth}
        \centering
        \includegraphics[width=.95\linewidth]{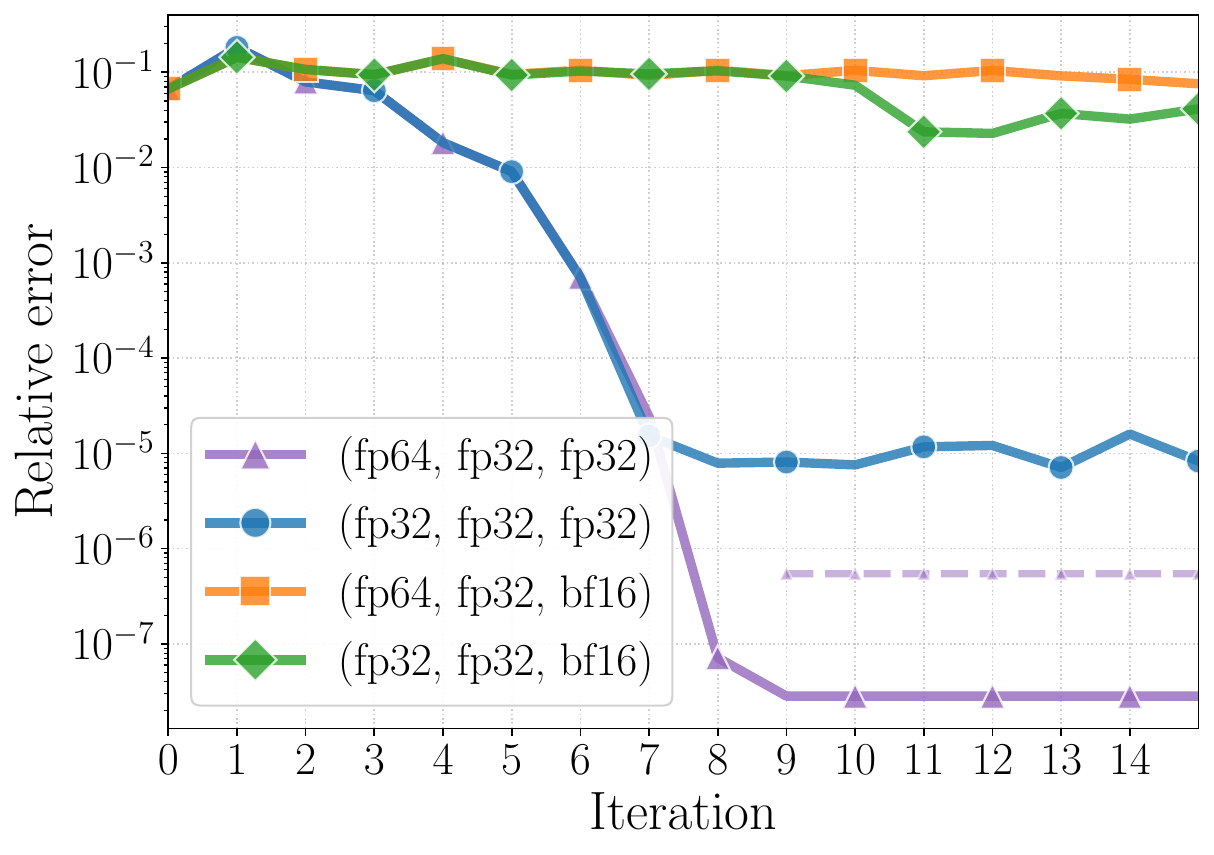}
    \end{minipage}%
    \begin{minipage}{.5\textwidth}
        \centering
        \includegraphics[width=.95\linewidth]{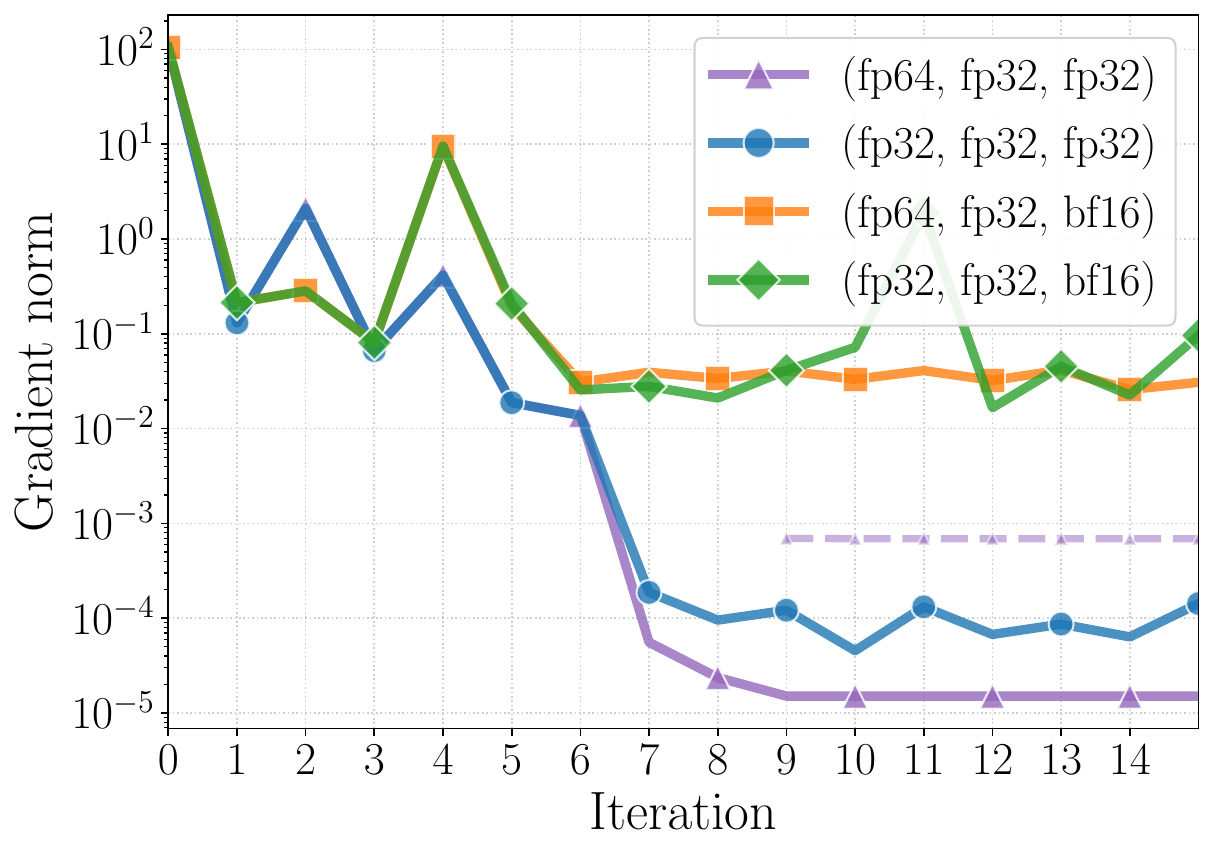}
    \end{minipage}
    \caption{Convergence of mixed precision Newton in relative error (left) and
    gradient norm (right), for different precision sets with unit roundoffs
    $(u_g,u,u_H)$, on the \texttt{SINREG} problem with $n=8$ and $x_0 =
    \bar{x}^* + 10^{-1}$.}
\label{fig:3_err}
\end{figure}

This behavior changes when considering different problems where the Hessian is
ill conditioned or the gradient is strongly approximated, as we will see in subsequent examples.
For instance, setting $n = 8$ in~\zcref{eq:d_dim_model} and taking the new
solution point to be the concatenation of two copies of $\bar{x}^*$, we now have
$\kappa(\Hstar) \approx 4 \times 10^5$. \zcref{fig:3_err} shows the 
convergence of Newton's method on this new problem with the same four precisions sets as previously.

In this setting, the Hessian is so ill-conditioned that the assumptions of \zcref{th:rel_err,th:grad_norm} may not be satisfied if the precisions are too low or if
the starting point is too far from the solution. We only plot the theoretical
bounds (transparent curves) when these assumptions are satisfied. For example, when
$u_H\equiv$ bfloat16 (orange and green curves), the assumptions are never satisfied and in fact
the method diverges. When $u_H\equiv$ fp32 (purple and blue curves), the method does converge,
although the assumptions are only satisfied for the purple curve (when $u_g\equiv$ fp64)%
\footnote{When $u_g \equiv$ fp32 (blue curve), the method stops slightly further from the solution, 
due to $\gamma_i$ being larger, and thus the assumptions are not satisfied and the bounds are not plotted.}
and only starting at iteration $i=8$,
when the current iterate is sufficiently near the exact solution. 
This shows that in such extreme cases the theory can be too conservative and unable to guarantee convergence
even though it is empirically observed.

Moreover, comparing the purple and blue curves reveals the impact of the conditioning on the
relative error, which makes  $\eg$ the dominant error source when $u_g = u$. 
This problem instance thus illustrates that evaluating the gradient in a higher precision than the working precision ($u_g \ll u$)
can be beneficial to improve the limiting accuracy.

\subsubsection{Finite differences} \label{subsec:fd}

In this section we illustrate the effect of the errors coming from the gradient
approximation and their interplay with rounding errors. 
Specifically, we consider forward finite differences
\cite{rando2025fdusage}:
\begin{equation*}
    g(x) \approx \left(\frac{f(x + he_k) - f(x)}{h}\right)_{k=0,\ldots,n-1},
\end{equation*}
where $e_k$ is the $k$th canonical basis vector, 
$\mathbb{R}^+ \ni h  \ll 1$, and the computation is done in precision with unit roundoff $u_g$.
The choice of $h$ is of fundamental importance. 
The error on the gradient coming from forward finite differences is indeed of order $\eg \approx h + u_g/h$~\cite[eq.~(8.5)]{nocedalWright}.
The common approach\footnote{\url{https://nhigham.com/2020/10/06/what-is-the-complex-step-approximation/}}
is thus to choose $h \approx \sqrt{u_g}$, which balances approximation and finite precision errors
and leads to a total error of order $\eg=\sqrt{u_g}$.

\begin{figure*}[ht!]
\centering

\subcaptionbox{$(u_g,u,u_H) \equiv$ (fp64, fp32, fp32), optimal $h\approx10^{-8}$. \label{fig:fd_2_combined}}{%
\begin{minipage}{.9\textwidth}
    \centering
    \begin{minipage}{.5\textwidth}
        \centering
        \includegraphics[width=\linewidth]{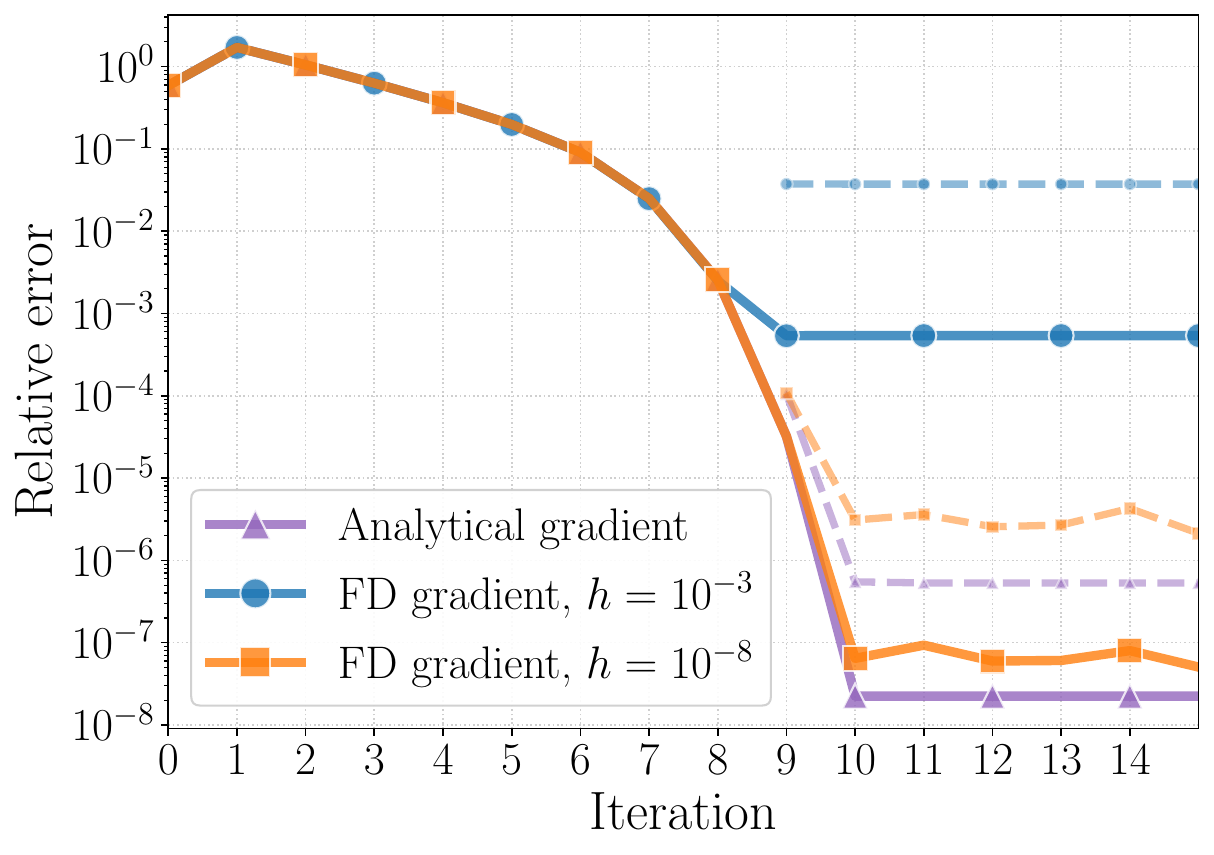}
    \end{minipage}%
    \begin{minipage}{.5\textwidth}
        \centering
        \includegraphics[width=\linewidth]{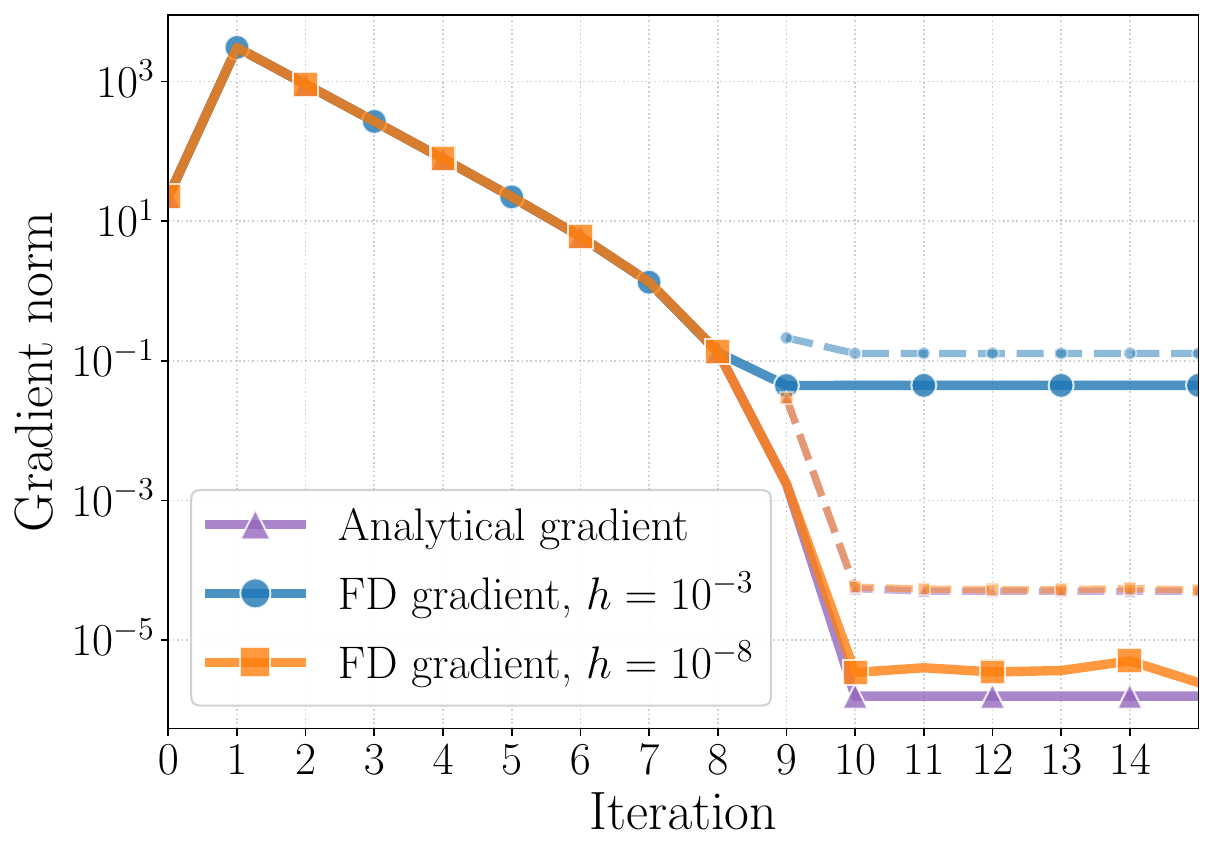}
    \end{minipage}
\end{minipage}
}

\par\vspace{0.5em}

\subcaptionbox{$(u_g,u,u_H) \equiv$ (fp32, fp32, fp32), optimal $h\approx10^{-3}$.\label{fig:fd_3_combined}}{%
\begin{minipage}{.9\textwidth}
    \centering
    \begin{minipage}{.5\textwidth}
        \centering
        \includegraphics[width=\linewidth]{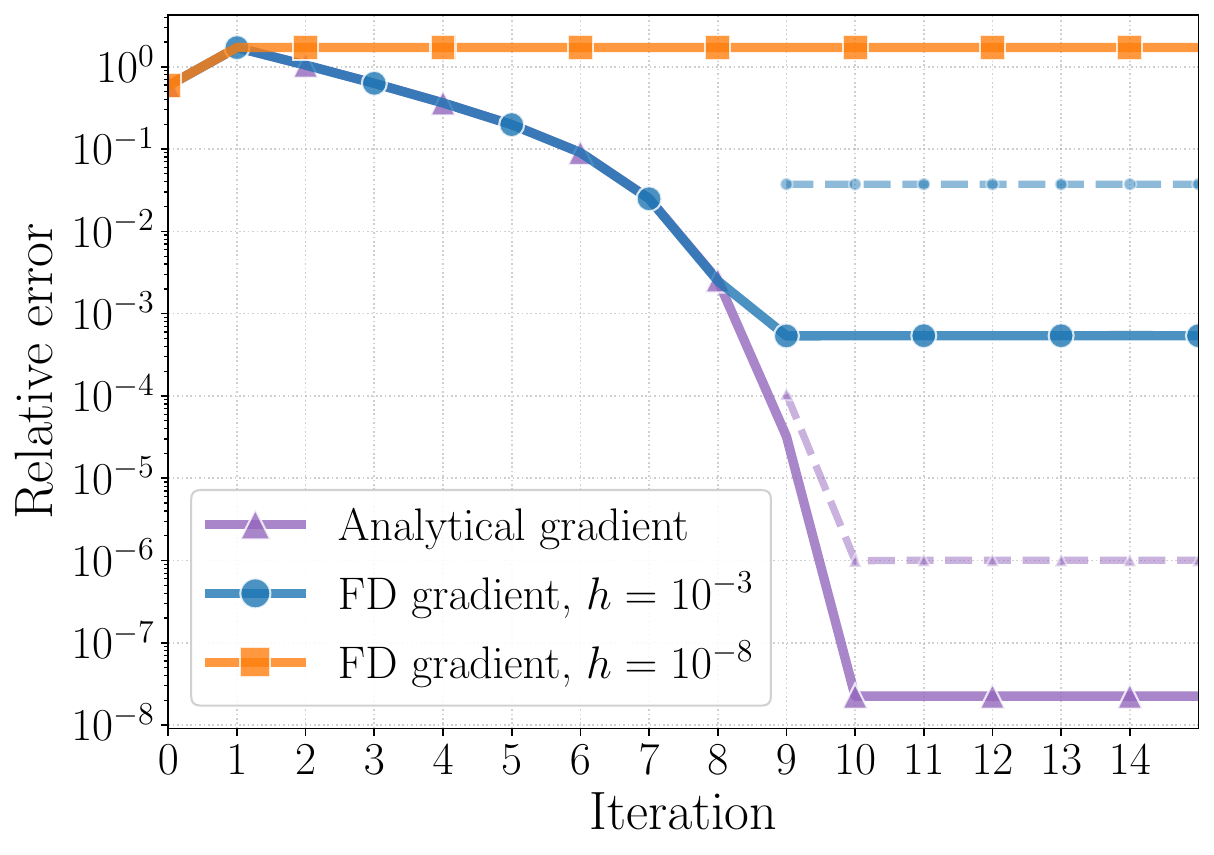}
    \end{minipage}%
    \begin{minipage}{.5\textwidth}
        \centering
        \includegraphics[width=\linewidth]{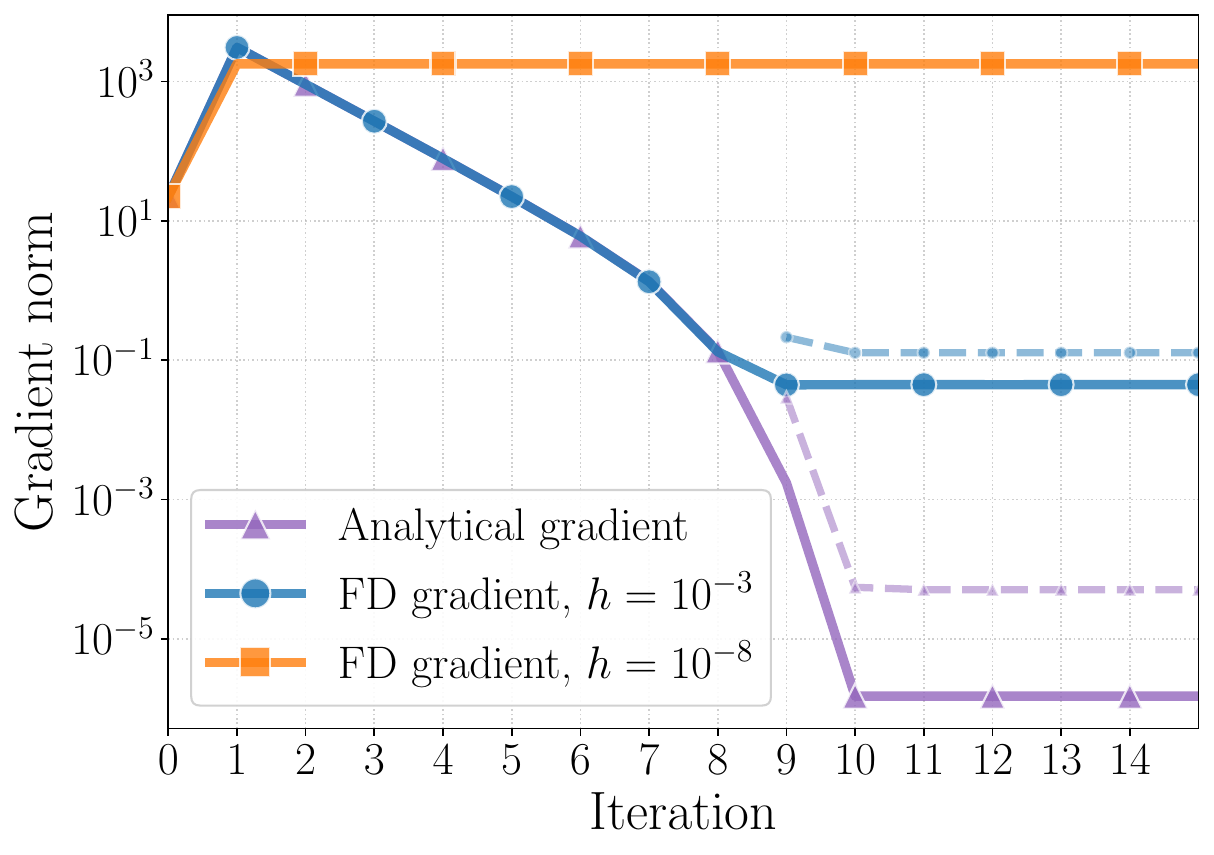}
    \end{minipage}
\end{minipage}
}
\caption{Convergence of mixed precision Newton in relative error (left) and gradient norm (right)
  on the \texttt{ENGVAL1} problem, using either the true gradient or one approximated by finite differences (FD), with different steps $h$ and precision sets.}
\label{fig:all_combined_fd}
\end{figure*}

\zcref{fig:all_combined_fd} compares the convergence of mixed precision
Newton on the \texttt{ENGVAL1} problem (defined in~\zcref{eq:engval1})
with the gradient approximated with finite differences,
for different steps $h$ and for $u_g\equiv$ fp64 or fp32.
We also report the baseline convergence using the exact gradient
to show the impact of these approximations. 

The figure confirms
that the choice $h \approx \sqrt{u_g}$ leads to the best limiting accuracy,
itself of order $\sqrt{u_g}$. Hence, for $u_g = u$ (\zcref{fig:fd_2_combined}), 
the limiting accuracy is significantly worse when
the gradient is approximated than when using the analytical gradient, due to 
$\eg$ dominating the $\limacc$ term. 
The figure also shows that, as expected, this approximation does not impact the
convergence rate of the method. Our framework can thus correctly predict
the behavior of the method even in this case, showing that it can cover
gradient errors that are not solely due to finite precision.  
Finally, this case provides another example where using 
higher precision for evaluating the gradient ($u_g \ll u$)
significantly improves the final accuracy.

\subsubsection{Performance profile on CUTEst problems} \label{subsec:cute}

We now evaluate the behavior of mixed precision Newton on a wider range of test problems
coming from the optimization problems dataset CUTEst~\cite{gratton2024s2mpjcutestoptimizationproblems}. 
We consider 30 problems for which second-order derivatives are available in the python
library, and whose Hessian is not too ill-conditioned ($\kappa(\Hstar) < 10^{15}$); their dimension ranges from 2 to 120.  We compare the uniform fp32 method, 
in which all computations are performed in fp32, with three mixed precision variants
using different precision sets for $(u_g, u, u_H)$:
(fp64, fp32, fp32) uses higher precision for the gradient, (fp32, fp32, bfloat16) uses lower precision for the Hessian,
and (fp64, fp32, bfloat16) does both at the same time, thereby using three different arithmetics.
All the methods use the same working precision, $u\equiv$ fp32.
According to our theory (see~\zcref{def:limg_thm}),
the limiting accuracy on the gradient norm at a given iteration $i$
will be, at best, of order $\norm{\Hi}\norm{\xh_i}u$. 
We therefore use this quantity as stopping criterion:
all the methods run for at most $1000$ iterations and stop earlier 
if the gradient norm becomes smaller than $\norm{\Hi}\norm{\xh_i}u$.

In order to compare the behavior of the different precision sets
on such a large number of problems, we use a performance profile~\cite{dolan2002benchmarking},
displayed in \zcref{fig:CUTE_results}.
The performance profile is built considering the number of iterations as a
performance metric. For each problem, the best performing method is the one
that converges in the least number of iterations, and the others are compared
to it by computing the ratio between their number of iterations and the best
one. The performance profile then reports, for each method, the percentage of
problems for which this ratio is below a certain threshold $\tau$.

\begin{figure}[ht]
\centering
    \centering
    \includegraphics[width=.5\linewidth]{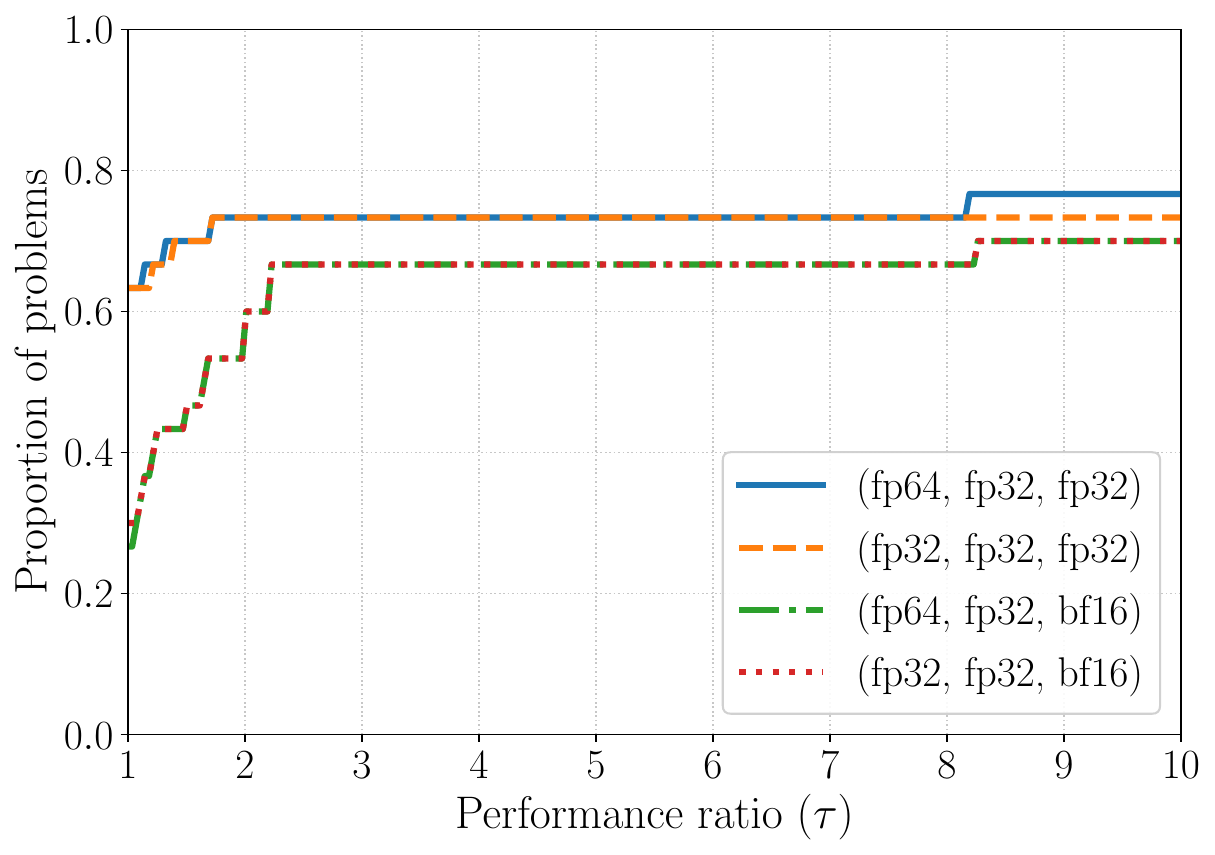}
\caption{Performance profile of uniform and mixed precision Newton's method for
various precision sets on a range of 30 CUTEst problems. The y-axis shows the proportion
of problems successfully solved in less than $\tau$ times the number of iterations
of the best method, where $\tau$ varies on the x-axis.}
\label{fig:CUTE_results}
\end{figure}

The performance profile shows that using a higher precision for the gradient (fp64 instead of fp32) barely
has any impact 
on either the number of successfully solved problems or the number of
iterations. There is only one problem for which using $u_g\equiv$ fp64 instead
of $u_g\equiv$ fp32 allows convergence, namely \texttt{HAIRY}, 
whose Hessian is not ill-conditioned ($\kappa(\Hstar) \approx 10^3$), but which shows a high error on the gradient when using lower precision ($\eg \approx 0.1$ for many iterations when using fp32 everywhere).
This suggests that, at least for these CUTEst problems, the evaluation of gradient in floating-point arithmetic
is typically well behaved and does not require the use of a precision higher than the working precision.

On the other hand, the use of lower precision for the Hessian (bfloat16 instead of fp32) does have a more
visible impact. The number of solved problems is reduced from 23 to 21:
the two problems for which setting $u_H\equiv$ bfloat16 prevents convergence are
\texttt{MEXHAT} and \texttt{PENALTY1}.
For these problems,
the assumption on the Hessian conditioning of~\zcref{th:rel_err} is in fact not
satisfied, which shows that our theory correctly detects the possible lack of convergence\footnote{That
being said, there are other problems, such as \texttt{BROWNAL}, for which our theory is also unable to
guarantee convergence with $u_H\equiv$ bfloat16, but which do converge. This once more illustrates 
that mixed precision Newton can be more robust than what the theory predicts.}.
Moreover, the average number of iterations is increased from about $10$ to about $18$;
this increase is limited to at most a factor $\tau=2$ for 60\% of the problems.
This suggests that many of these CUTEst problems present a sufficiently well-conditioned
Hessian to be solved at a reduced computational cost by using lower precision than
the working precision for the Hessian.

\subsection{Mixed precision inexact Newton}
\label{subsec:in_exp}

We now consider the mixed precision inexact Newton
method proposed in~\zcref{sec:inexact_newton}. We use the CG method
to approximately solve the Hessian linear systems. We 
use \zcref{stopping} as stopping criterion, with a fixed tolerance $\eta_i=\eta$ across all nonlinear iterations $i$. 
The CG solver is initialized with the zero vector, and the maximum number of linear iterations is set to $100$.
We consider the \texttt{ENGVAL1} problem defined in~\zcref{eq:engval1}, which leads to Hessian matrices
with moderate condition numbers between $10^2$ and $10^3$ depending on the nonlinear iteration.

Recall that, according to our theory and as discussed in~\zcref{sec:inexact_newton},
we expect the convergence rate of the method to be mainly driven by
$\epsilon_i^H\approx u_H+\eta/\ki$, with $\ki$ defined in~\zcref{def:ki}.
Moreover, for this problem, we have $\ki \approx 1$ for all $i$.
Hence, we expect the convergence rate to be determined by the maximum of $u_H$ and $\eta$.

\begin{figure}[ht]
    \centering

    \begin{subfigure}{.48\textwidth}
        \centering
        \includegraphics[width=\linewidth]{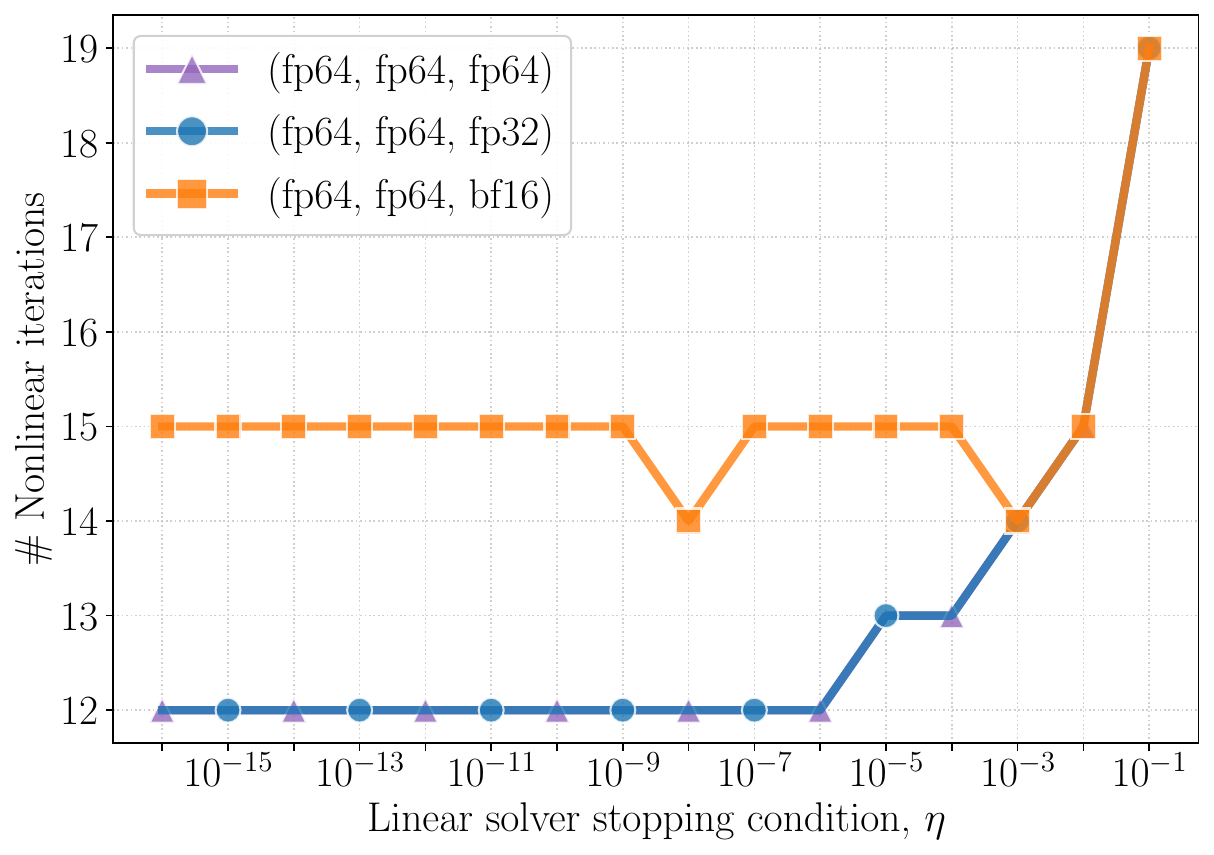}
        \caption{Nonlinear its. vs CG tolerance $\eta$.}
        \label{fig:in_mult_prec_eta}
    \end{subfigure}
    \hfill
    \begin{subfigure}{.48\textwidth}
        \centering
        \includegraphics[width=\linewidth]{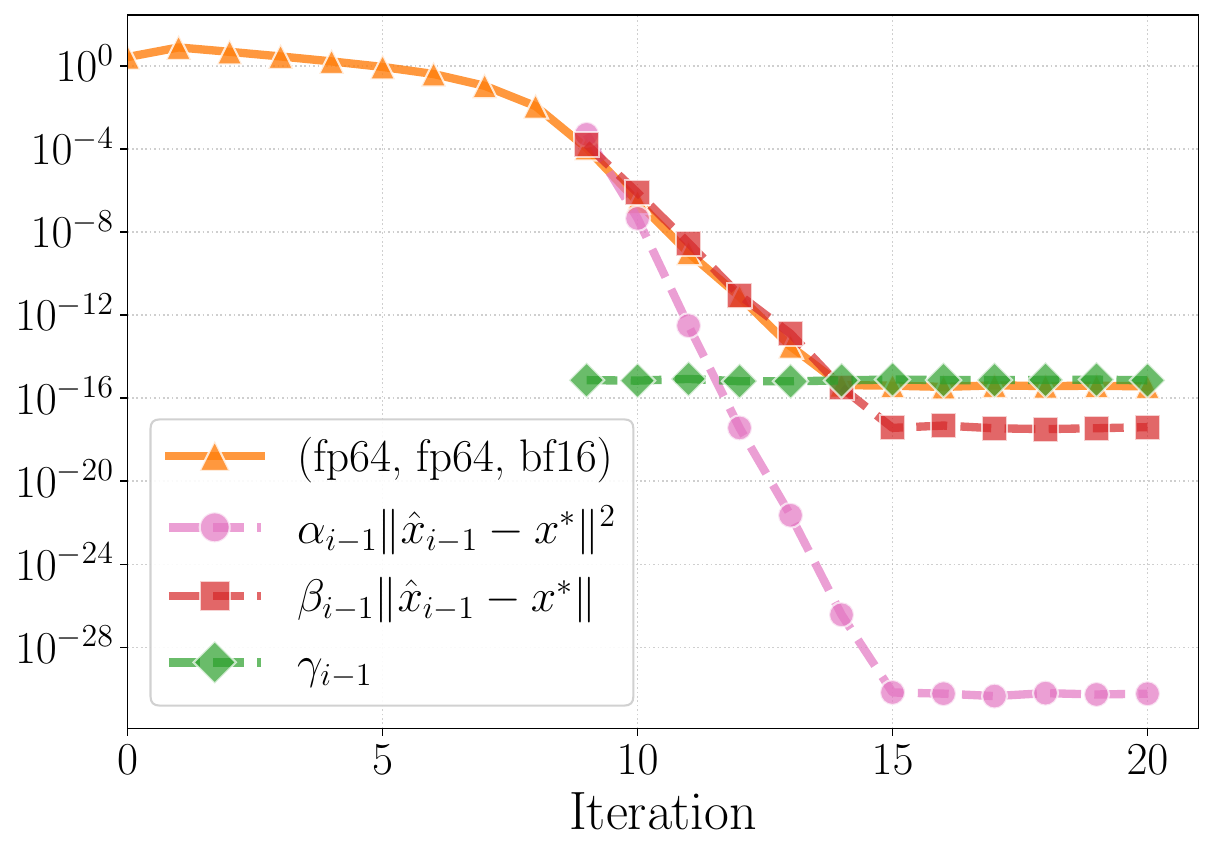}
        \caption{$(u_g,u,u_H) \equiv (\text{fp64},\text{fp64},\text{bf16})$.}
        \label{fig:conv_b16}
    \end{subfigure}

    \vspace{0.5em}

    \begin{subfigure}{.48\textwidth}
        \centering
        \includegraphics[width=\linewidth]{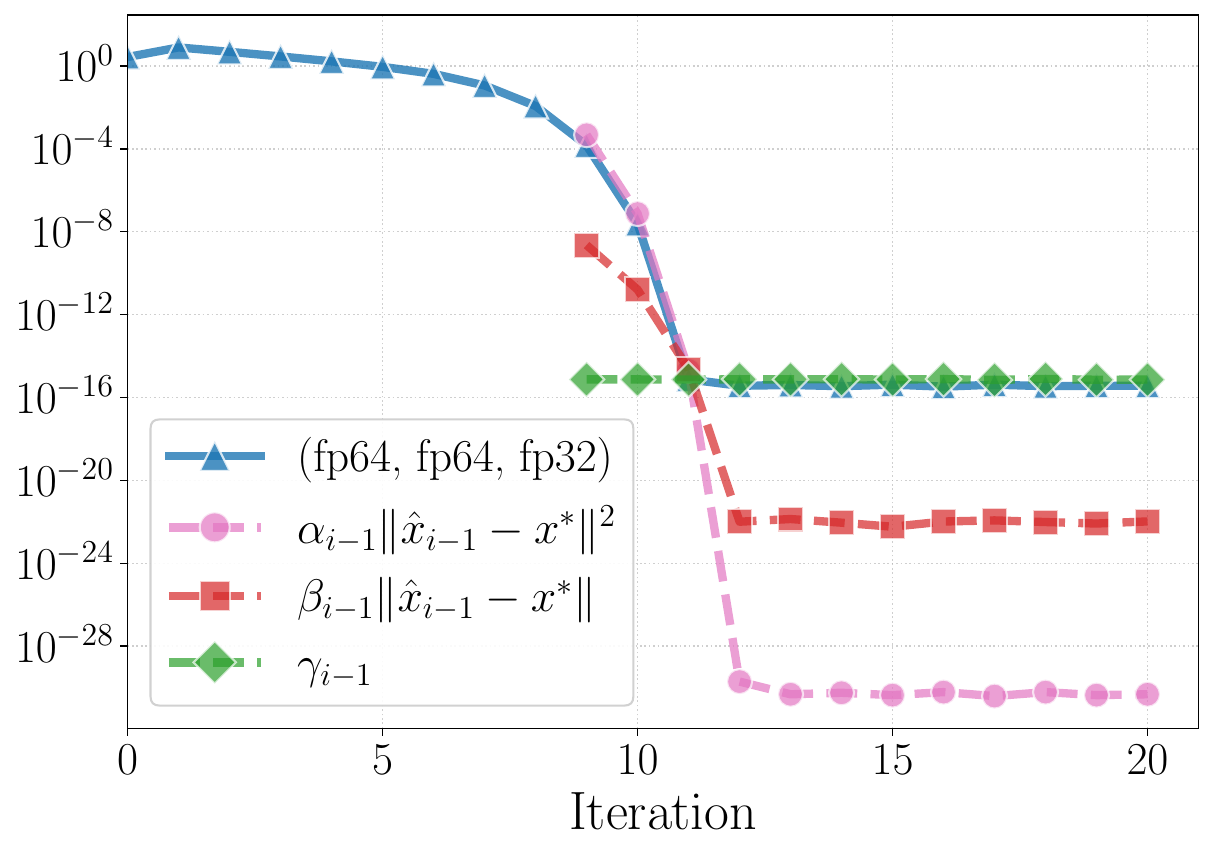}
        \caption{$(u_g,u,u_H) \equiv (\text{fp64},\text{fp64},\text{fp32})$.}
        \label{fig:conv_fp32}
    \end{subfigure}
    \hfill
    \begin{subfigure}{.48\textwidth}
        \centering
        \includegraphics[width=\linewidth]{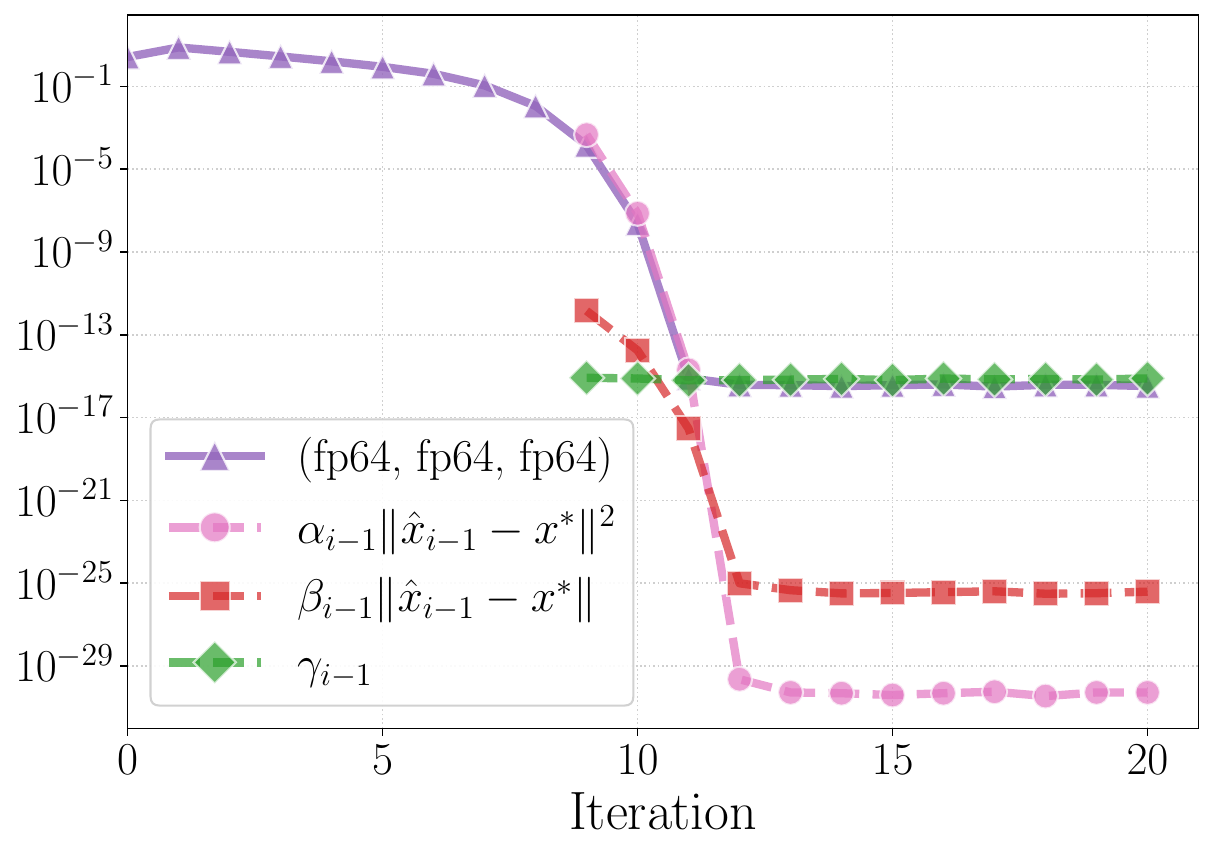}
        \caption{$(u_g,u,u_H) \equiv (\text{fp64},\text{fp64},\text{fp64})$.}
        \label{fig:conv_fp64}
    \end{subfigure}

    \caption{
    Mixed precision inexact Newton on the \texttt{ENGVAL1} problem.
    Panel~(a) reports the number of nonlinear iterations required for convergence
    as a function of the CG stopping tolerance $\eta$ for several precision sets
    $(u_g,u,u_H)$; the purple and blue curves overlap.
    Panels~(b)--(d) show the convergence in relative error for $\eta=10^{-10}$
    for three representative precision sets. The three terms of the theoretical
    bound are also displayed separately whenever the assumptions of
    \zcref{th:rel_err} are satisfied.
    }
    \label{fig:in_mult_prec_combined}
\end{figure}

We confirm this experimentally in \zcref{fig:in_mult_prec_eta}.
We compare the number of nonlinear iterations required by mixed precision inexact Newton's method,
with precisions $u_g=u\equiv$ fp64 and varying $u_H$ and $\eta$.
The figure shows that, as long as the stopping tolerance is large ($\eta \ge10^{-3}$),
there is no difference between using $u_H\equiv$ fp64, fp32, or bfloat16,
because the error is dominated by the solver's inexactness. Thus, for such highly approximate solvers,
we may safely use low precision arithmetic for the Hessian without impacting the convergence rate.
For smaller tolerances $\eta \le 10^{-4}$, a difference between $u_H\equiv$ bfloat16 and
$u_H\equiv$ fp32 or fp64 appears, with the former stagnating at 14--15 nonlinear iterations regardless of
$\eta$, since the error is then dominated by $u_H$.

One could expect to see a similar difference appear between $u_H\equiv$ fp64 and $u_H\equiv$ fp32 
when $\eta$ becomes smaller than the fp32 unit roundoff, but this is not the case.
The reason for this behavior is explained in~\zcref{fig:conv_fp64,fig:conv_fp32}, which compare the relative error
convergence with $u_H\equiv$ fp64 and $u_H\equiv$ fp32, and plots the three terms composing its theoretical bound separately.
It shows that the dominant term is first $\alpha_i\norm{\xh_i-x^*}^2$, which decreases
with $i$, until it becomes smaller than $\beta_i\norm{\xh_i-x^*}$; however, at that point,
the relative error is already below the limiting accuracy $\limacc$. Hence, the unit roundoff $u_H$, which only appears in
the term $\beta_i$, does not have any impact on the convergence rate in this situation. On the other hand, looking at the convergence with $u_H\equiv$ bfloat16 in~\zcref{fig:conv_b16}, we see that the term $\beta_i\norm{\xh_i-x^*}$ becomes dominant earlier, and so delays convergence by a few iterations.

Finally, we want to validate the theoretical bounds 
when $\ki \gg 1$ for some $i$, in which case $\eta_i$ should not be fixed across iterations, but
rather depend on the current iterate through $\ki$. 
In this experiment, we use the same test case as in~\zcref{fig:3_err},
for which $\ki$ ranges from $10^1$ to $10^3$, with no particular pattern across
iterations. In this setting, our bounds predict that the stopping condition
derived in~\zcref{sec:inexact_newton} ($\eta_i = \ki u_H$) should be as effective
as $\eta_i = u_H$, that is, should guarantee the same convergence rate as if using a
direct solver, while being less computationally expensive. 

\begin{figure}[ht]
    \centering
        \begin{minipage}{.5\textwidth}
            \centering
            \includegraphics[width=.95\linewidth]{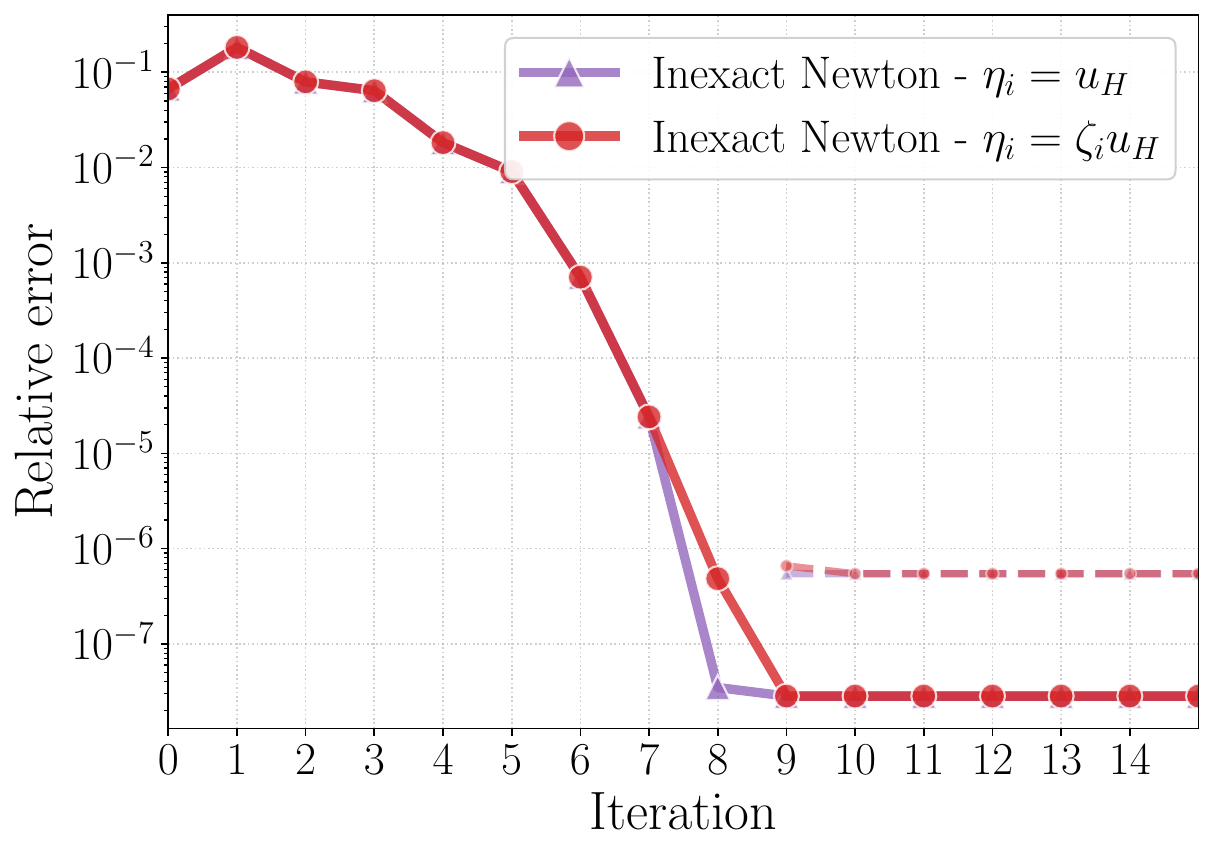}
        \end{minipage}%
        \begin{minipage}{.5\textwidth}
            \centering
            \includegraphics[width=.95\linewidth]{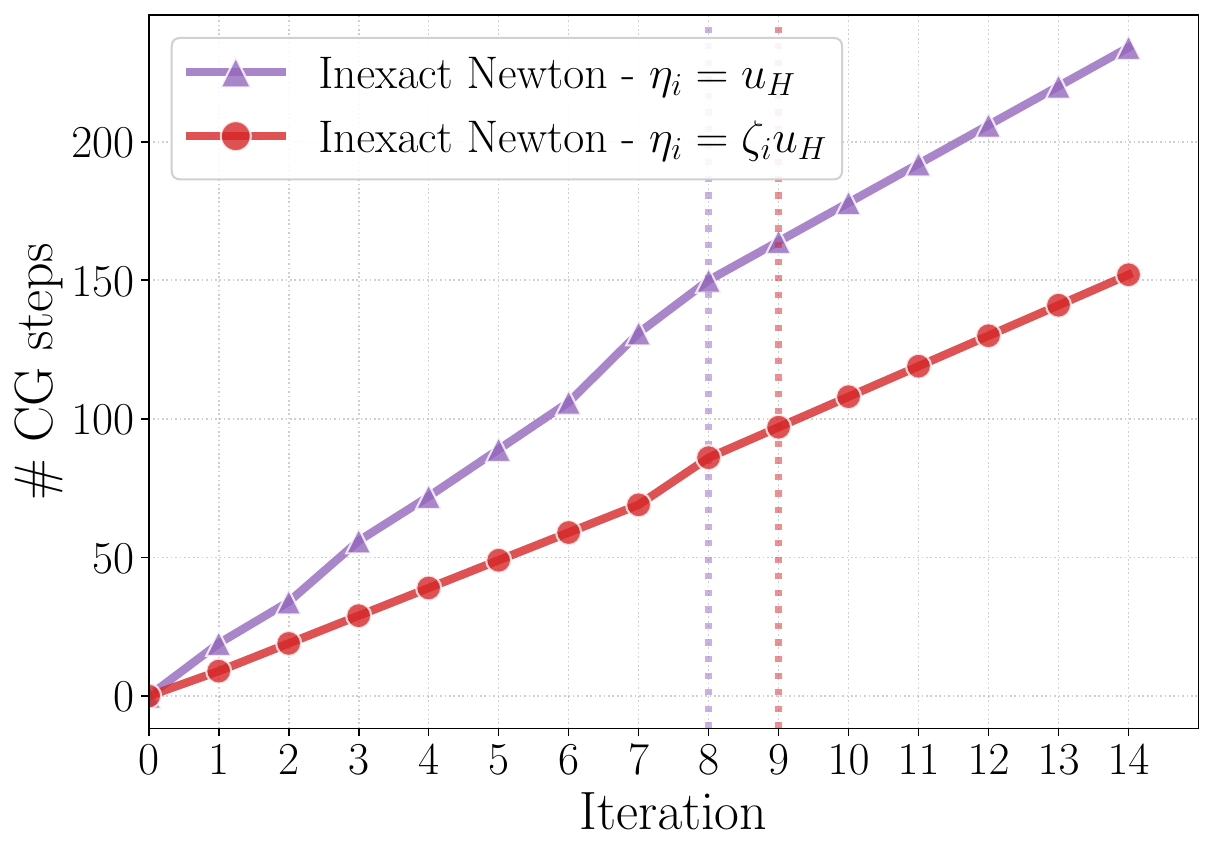}
        \end{minipage}
    \caption{Relative error (left) and number of linear CG iterations (right)
for mixed precision inexact Newton, using precisions (fp64, fp32, fp32) with
different CG stopping conditions. The problem is \texttt{SINREG1} with $n =
8$ and $x_0 = \bar{x}^* + 10^{-1}$. The vertical dotted lines in the right plot indicate the
iteration at which the corresponding algorithm reached the limiting accuracy.}
    \label{fig:cg_stop_cond}
\end{figure}

In~\zcref{fig:cg_stop_cond} we confirm that, for this problem, setting $\eta_i
= u_H$ for all $i$ guarantees the same converge rate as using a direct solver.
This is visible by comparing the purple curve with the one
of~\zcref{fig:3_err}. Moreover, we see that setting $\eta_i = \ki u_H$ slightly
increases the number of nonlinear iterations needed for the method to converge,
but significantly reduces the number of CG steps overall, from 150 (across 8 nonlinear iterations)
to 100 (across 9 nonlinear iterations).

\subsection{Mixed precision Gauss--Newton}
\label{subsec:gn_exp}

We conclude our experiments with the mixed precision Gauss--Newton method proposed in~\zcref{sec:gauss_newton}.
We consider the least-squares problem \texttt{SINREG} defined
in~\zcref{eq:regression_base}--\zcref{eq:d_dim_model}, again using $n=4$ and
$\bar{x}^* = (1.0123, 2.01234, 1.01231, 2.01234)$, and 
with starting point $x_0 = \bar{x}^* + 10^{-1}$.
We solve the Gauss--Newton system by LU factorization in precision with unit roundoff $u_H$.
Thus, in this case, we expect the error term $\eh$ to be driven by the maximum of $u_H$ and
the relative norm of the discarded term $S(\xh_i)$.
We recall that the noise $\xi$ added to the model outputs $y_i$ is sampled
from a uniform $[-\delta, \delta]$ distribution.  To make the norm
of the discarded term $S(\xh_i)$ vary, we test different noise sizes $\delta$.

In~\zcref{fig:gn_vs_n_0}, we compare the convergence in gradient norm of
mixed precision Newton and Gauss--Newton; the relative error behaves similarly. 
We set $u=u_g\equiv$ fp64 and we compare $u_H\equiv$ fp64 (left plot) with $u_H\equiv$ bfloat16 (right plot). 
The noise size is here set to $\delta = 5$, which leads to a discarded term of quite large
relative norm $\norm{S(\xh_i)}/\norm{H(\xh_i)}$ (starting around $3\times
10^{-2}$ for the first iterations and decreasing to $3\times 10^{-4}$ at convergence).
As a result, in the left plot (with $u_H\equiv$ fp64), the Hessian error $\eh$ is dominated by this term and is much larger
with Gauss--Newton than with standard Newton, so the former converges at a significantly slower rate than the latter.
On the other hand, in the right plot (with $u_H\equiv$ bfloat16), the Hessian error is now dominated
by the rounding errors in bfloat16 arithmetic, and the two methods achieve a similar convergence rate.
This illustrates that lower precision arithmetic can be safely used for the Hessian when
it is already approximated by Gauss--Newton or, conversely, that Gauss--Newton can safely replace
standard Newton if the Hessian precision is low.

\begin{figure}[ht]
\centering
    \begin{minipage}{.5\textwidth}
        \centering
        \includegraphics[width=.95\linewidth]{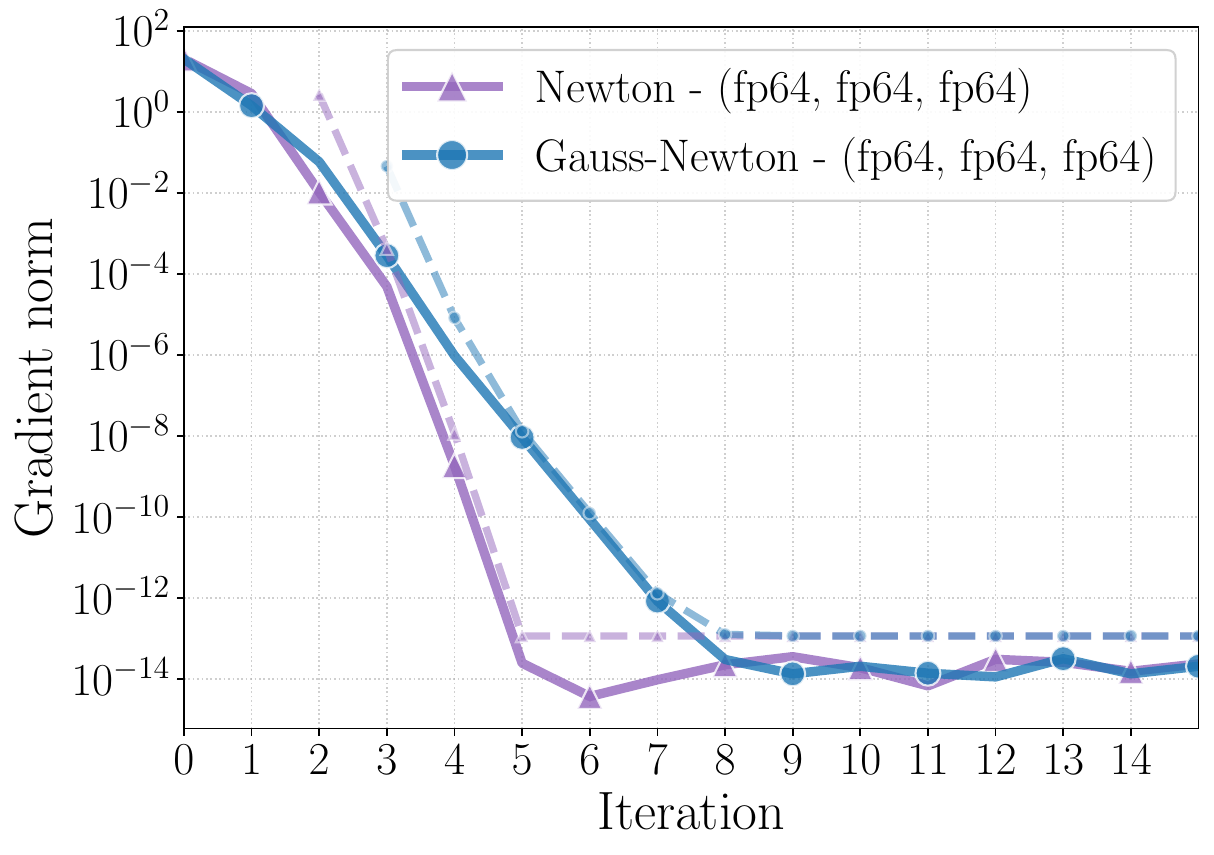}
    \end{minipage}%
    \begin{minipage}{.5\textwidth}
        \centering
        \includegraphics[width=.95\linewidth]{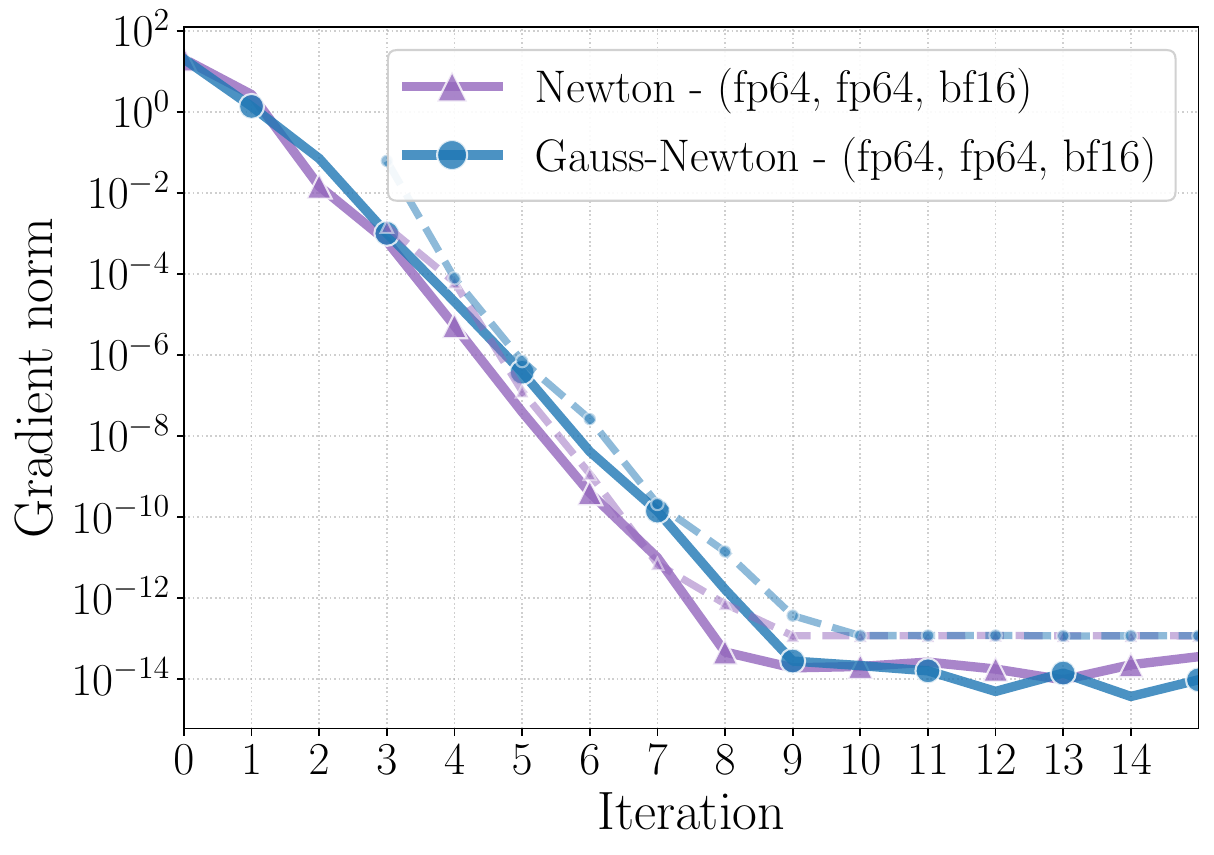}
    \end{minipage}
    \caption{Convergence of mixed precision Newton and Gauss--Newton in
    gradient norm for different precision sets with unit roundoffs
    $(u_g,u,u_H)$, on the \texttt{SINREG} problem with $n=4$, $x_0 = \bar{x}^*
    + 10^{-1}$, and $\delta = 5$.}
    \label{fig:gn_vs_n_0}
\end{figure}

\begin{figure}[ht]
\centering
    \begin{minipage}{.5\textwidth}
        \centering
        \includegraphics[width=.95\linewidth]{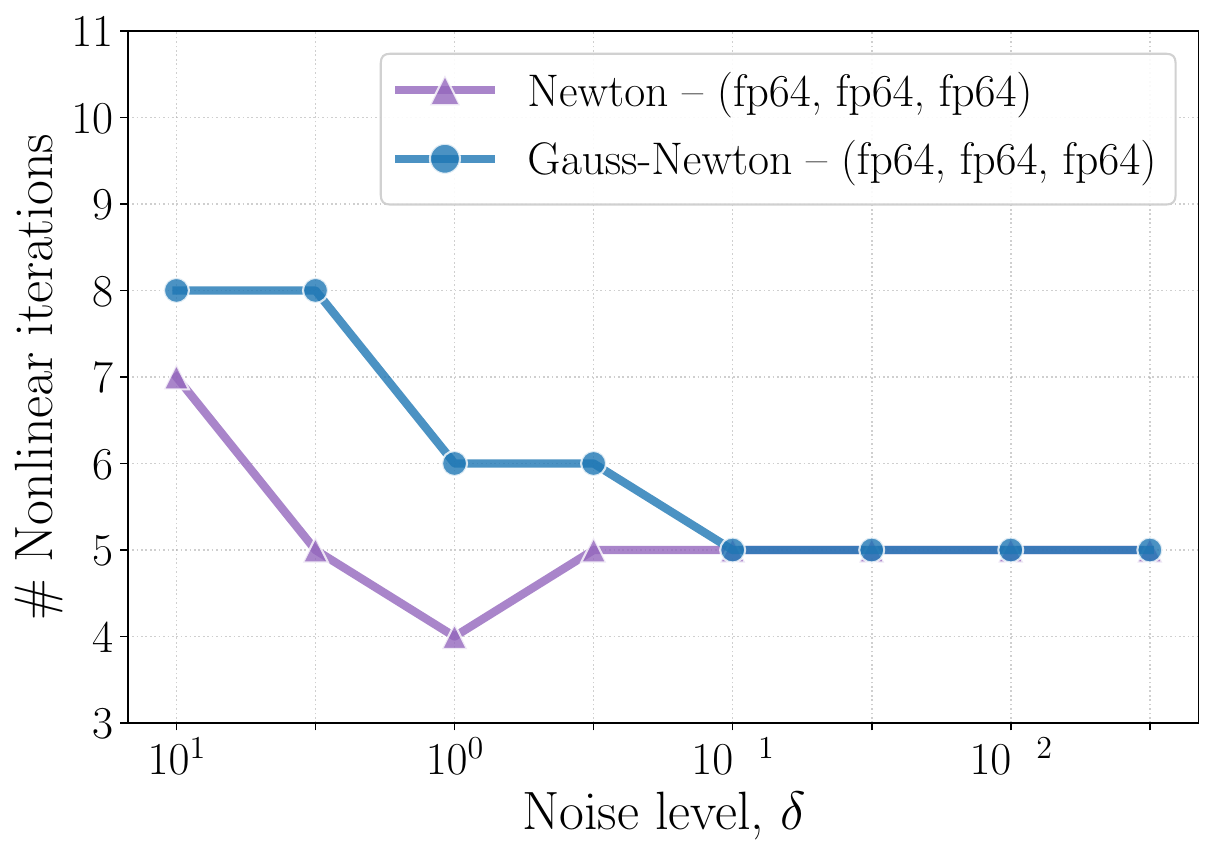}
    \end{minipage}%
    \begin{minipage}{.5\textwidth}
        \centering
        \includegraphics[width=.95\linewidth]{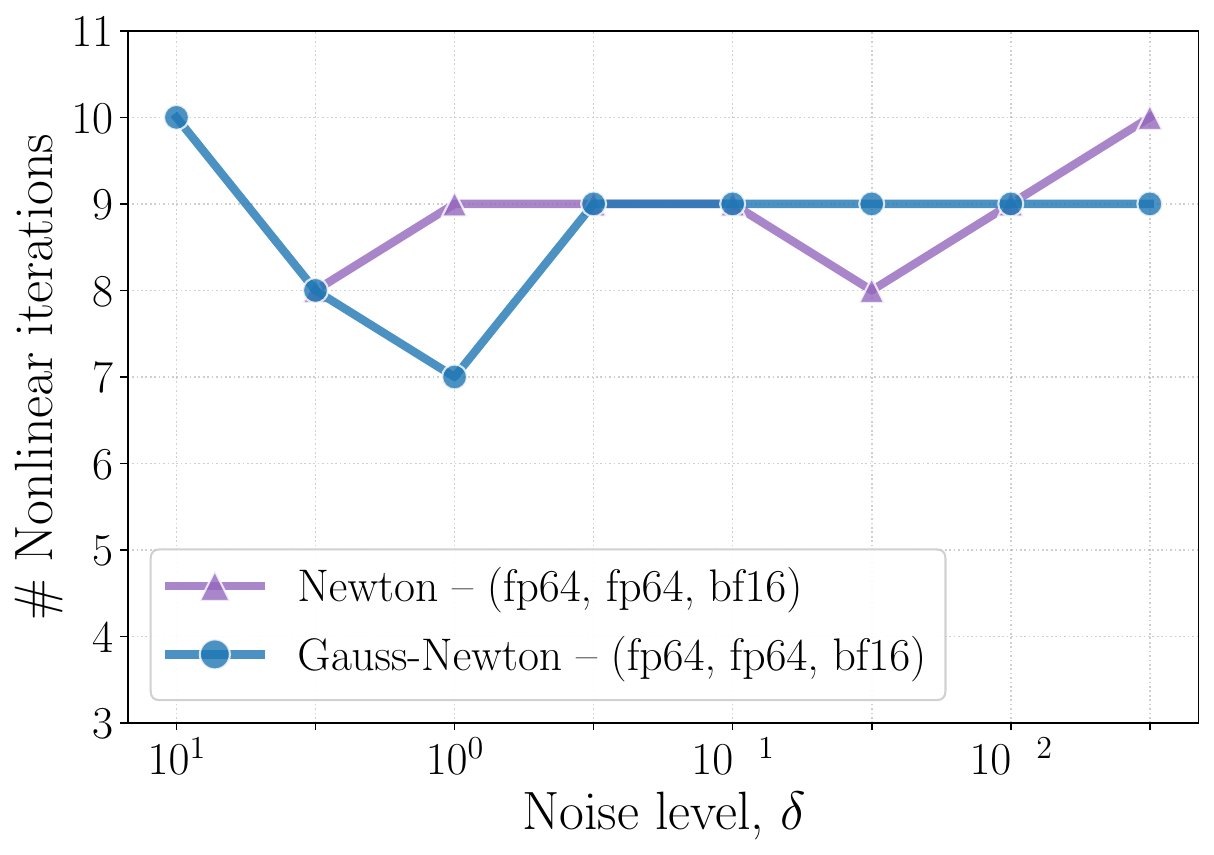}
    \end{minipage}
    \caption{Iterations needed to converge to limiting gradient for mixed precision Newton and Gauss--Newton, for different precision sets with unit roundoffs
    $(u_g,u,u_H)$, on the \texttt{SINREG} problem with $n=4$, $x_0 = \bar{x}^* + 10^{-1}$, and varying noise size $\delta$.}
    \label{fig:gn_vs_n_noise}
\end{figure}

In~\zcref{fig:gn_vs_n_noise}, we plot the number of nonlinear iterations
required by the two methods for different noise sizes $\delta$. When $\delta$
is small, there is no significant difference between the two methods, even with
$u_H\equiv$ fp64, because the residual term discarded when using Gauss--Newton
is small and does not dominate the Hessian error. As $\delta$ increases, when
using $u_H\equiv$ fp64, the convergence of Gauss--Newton becomes
slower than that of Newton. On the other hand, when using 
$u_H\equiv$ bfloat16, the two methods converge in a similar number of iterations even for large 
$\delta$; interestingly, Gauss--Newton may sometimes converge slightly faster than Newton.

\section{Conclusion}\label{sec:conclusions}

We have presented a general framework for mixed precision Newton's method for
optimization, where the three main operations of the algorithm (gradient
evaluation, Hessian system, and solution update) are affected by inexactness. 
Our error model is generic and can be applied to various sources of inexactness, including
floating-point arithmetic, approximate linear solvers (inexact Newton), Hessian matrix approximations
(Gauss--Newton), and any combination thereof. The main results of our analysis are 
\zcref{th:rel_err,th:grad_norm},
which show how the convergence rate and attainable accuracy of Newton's method are affected by
these different sources of inexactness and provide guidelines for choosing
the precisions of the different operations. 
For the inexact Newton and Gauss--Newton variants, 
we link our theory with known convergence results for exact
arithmetic in the literature and discuss what changes in
floating-point arithmetic, highlighting the interplay between these approximate
variants and rounding errors.
We have performed an extensive set of numerical experiments to validate the theoretical analysis and
illustrate the interesting behavior of mixed precision Newton. Our results show
that the empirically observed convergence rate and attainable accuracy match their theoretically 
predicted behavior when the assumptions underlying our theory are satisfied, and that the bounds are quite descriptive.
Tests on a broad range of problems from the CUTEst dataset highlight the robustness and wide applicability of mixed precision Newton's method,
and suggest a significant potential for using lower precisions while only marginally sacrifing the accuracy and convergence rate.

This work opens the way to many promising perspectives:
\begin{itemize}
\item the error model and convergence theory could be extended to other
quasi-Newton methods, such as BFGS~\cite[chap.~6.1]{nocedalWright}, where Hessian inexactness depends on the
gradient accuracy;
\item the analysis could be extended to stochastic optimization~\cite{byrd2015stochasticquasinewtonmethodlargescale},
where inexactness in gradient and Hessian evaluations may also arise from data sampling;
\item mixed precision Newton could be compared with perturbed first-order
methods, such as the gradient descent method analysed
in~\cite{vasin2026lowerupperboundsconvergence}, to assess whether the use of
Hessian information continues to provide advantages when operating under low or
mixed precision arithmetic;
\item a practical high-performance implementation of mixed precision Newton and its variants
could be developed, to assess quantitatively the performance gains in memory, time, and energy
that can be achieved by the use of lower precisions on modern hardware.
\end{itemize}

\section*{Acknowledgments}

Funding for the PhD thesis of G. C. was provided by the Graduate+ MATHINFI Programme. 
This work was also partially supported by the Fondation Simone et Cino Del Duca and by projects
managed by the French National Research Agency (ANR):
France 2030 NumPEx Exa-MA (ANR-22-EXNU-0002),
PEPR IA SHARP (ANR-23-PEIA-0008),
MixHPC (ANR-23-CE46-0005-01),
FPT-4 (ANR-24-CE46-7572), and MEPHISTO (ANR-24-CE23-7039). 

\bibliographystyle{plainurl}
\bibliography{biblio}

\appendix

\section{Proofs}

\subsection{Proof of~\zcref{th:rel_err}} \label{sec:proof_rel_err}

For the proof of this theorem, we will need the following two lemmas. 

\begin{lemma}[Lem.~4.1.12, \cite{dennis_schnabel}]\label{lemma21}
Let $g: \bbR^n \to \bbR^m$ be continuously differentiable in the open convex set $\Omega \subseteq \bbR^n$. Assume, for $v \in \Omega$, the matrix $H$ of the first-order derivatives of $g$ to be $L_H$-Lipschitz continuous at $v$ in the neighborhood $\Omega$. Then, for any $w \in \Omega$, it holds
\[
\norm{ g(w) - g(v) - H(v)(w-v)} \leq \frac{L_H}{2}\norm{ w- v} ^2. 
\]
\end{lemma}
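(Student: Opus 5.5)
The plan is to write the Taylor remainder as an integral along the segment from $v$ to $w$ and then bound the integrand using the Lipschitz assumption at $v$. Since $\Omega$ is open and convex and $v,w\in\Omega$, the whole segment $\{v+t(w-v): t\in[0,1]\}$ lies in $\Omega$. So the map $t\mapsto g(v+t(w-v))$ is continuously differentiable on $[0,1]$, and its derivative is $H(v+t(w-v))(w-v)$ by the chain rule.

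First, I apply the fundamental theorem of calculus componentwise to this map. This gives
\[
g(w)-g(v) = \int_0^1 H\bigl(v+t(w-v)\bigr)(w-v)\,dt .
\]
Next, I subtract $H(v)(w-v)=\int_0^1 H(v)(w-v)\,dt$, which yields the exact remainder formula
\[
g(w)-g(v)-H(v)(w-v) = \int_0^1 \bigl[H\bigl(v+t(w-v)\bigr)-H(v)\bigr](w-v)\,dt .
\]

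Then I take norms. I use that the norm of a vector-valued integral is at most the integral of the norm; this holds for any norm, for instance by passing to Riemann sums and applying the triangle inequality. I also use consistency of the operator norm with the vector norm. Together these bound the remainder by
\[
\int_0^1 \norm{H\bigl(v+t(w-v)\bigr)-H(v)}\,\norm{w-v}\,dt .
\]
The Lipschitz continuity of $H$ at $v$ in $\Omega$ means $\norm{H(y)-H(v)}\le L_H\norm{y-v}$ for every $y\in\Omega$. Applying it with $y=v+t(w-v)$ bounds the integrand by $L_H t\norm{w-v}^2$. Finally, $\int_0^1 t\,dt=\tfrac12$, which gives the claimed bound $\tfrac{L_H}{2}\norm{w-v}^2$.

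No step is a serious obstacle. The only points needing care are that convexity of $\Omega$ keeps every point $v+t(w-v)$ inside the region where the Lipschitz bound applies, and that the inequality $\norm{\int F}\le\int\norm{F}$ is valid for an arbitrary vector norm. For the latter I would argue via Riemann sums, since the integrand is continuous.
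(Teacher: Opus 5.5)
Your proof is correct and is essentially the standard argument of the cited source (Dennis and Schnabel, Lem.~4.1.12), which the paper invokes without reproducing a proof. That argument also uses convexity of $\Omega$ to write the remainder as $\int_0^1 [H(v+t(w-v))-H(v)](w-v)\,dt$, bounds the integrand by $L_H t\norm{w-v}^2$ using Lipschitz continuity at $v$, and integrates; your justification of $\norm{\int F}\le\int\norm{F}$ for an arbitrary norm and your use of operator-norm consistency supply details the reference leaves implicit.
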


\begin{lemma}[Thm.~3.1.4, \cite{dennis_schnabel}]\label{lemma22}
Let $M \in \bbR^{n \times n}$. If $\norm{M} < 1$, then $(I+M)^{-1}$ exists and 
\[
  \norm{(I + M)^{-1}} \leq \frac{1}{1 - \norm{M}}.
\]
Also, let $A, B \in \bbR^{n \times n}$ and $\norm{A^{-1}(B-A)} < 1$, then $B$ is nonsingular and
\[
  \norm{B^{-1}} \leq \frac{\norm{A^{-1}}}{1 - \norm{A^{-1}(B-A)}}.
\]

\end{lemma}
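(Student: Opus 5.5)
The first part rests on a direct injectivity argument, and the second part reduces to the first by factoring $B$ through $A$. Throughout, $\norm{\cdot}$ is the operator norm subordinate to a vector norm, as in the notation section. Hence it is submultiplicative and $\norm{I}=1$; these are the only properties I will use.

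For the first claim, fix $x\in\bbR^n$ and write $x=(I+M)x-Mx$. The triangle inequality gives $\norm{x}\le\norm{(I+M)x}+\norm{M}\norm{x}$, that is,
\[
(1-\norm{M})\norm{x}\le\norm{(I+M)x}.
\]
Since $\norm{M}<1$, this implies that $(I+M)x=0$ forces $x=0$. So $I+M$ is injective, and being square it is invertible. Next, substitute $x=(I+M)^{-1}y$ for arbitrary $y$. This gives $\norm{(I+M)^{-1}y}\le\norm{y}/(1-\norm{M})$, and taking the supremum over $\norm{y}=1$ yields the stated bound.

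An equivalent route is the Neumann series $\sum_{k\ge0}(-M)^k$. It converges absolutely because $\norm{(-M)^k}\le\norm{M}^k$. Its partial sums $S_N$ satisfy $(I+M)S_N=I-(-M)^{N+1}\to I$, which identifies the limit as $(I+M)^{-1}$, and the geometric series bounds its norm by $1/(1-\norm{M})$. I would present the injectivity argument and mention this only as a remark.

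For the second claim, set $M:=A^{-1}(B-A)$ and factor
\[
B=A+(B-A)=A\bigl(I+A^{-1}(B-A)\bigr)=A(I+M).
\]
The hypothesis $\norm{M}<1$ and the first part show that $I+M$ is invertible. Then $B$ is a product of two nonsingular matrices (here $A$ is implicitly nonsingular, since $A^{-1}$ appears in the hypothesis), so $B$ is nonsingular with $B^{-1}=(I+M)^{-1}A^{-1}$. Submultiplicativity and the first part then give $\norm{B^{-1}}\le\norm{A^{-1}}/(1-\norm{M})$, which is the claim.

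I do not expect a genuine obstacle in this proof. The only points needing care are that the norm is an operator (hence submultiplicative) norm, and that injectivity of a square matrix implies invertibility; both are covered by the conventions in the notation section.
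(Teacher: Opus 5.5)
Your proof is correct. The paper gives no proof of this lemma; it simply quotes Thm.~3.1.4 of \cite{dennis_schnabel}. The useful comparison is therefore with the standard textbook argument, which is the Neumann series you mention only as a remark.

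Your main argument rests on the lower bound $(1-\norm{M})\norm{x}\le\norm{(I+M)x}$. It is more elementary: it needs no convergence of a matrix series and no identification of a limit, and it gives invertibility and the norm bound in one step. However, it uses more than you claim. You need the compatibility $\norm{Mx}\le\norm{M}\norm{x}$ and the supremum definition of the operator norm. So your sentence saying that submultiplicativity and $\norm{I}=1$ are the only properties used is not accurate for this route.

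Those two properties are exactly what the Neumann-series argument needs. That is why the result holds verbatim for any submultiplicative matrix norm with $\norm{I}=1$, including norms not induced by a vector norm. Your argument can be extended to that setting too, once invertibility is known, via
\[
(I+M)^{-1}=I-M(I+M)^{-1}.
\]
In the paper's setting, $\norm{\cdot}$ is always an operator norm, so nothing is lost. Your reduction of the second claim through $B=A(I+M)$ with $M=A^{-1}(B-A)$ is the standard one.
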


\begin{proof}[Proof of \zcref{th:rel_err}]
    Since $H$ is continuous and $\Hstar$ is positive definite,  there exists
    $\rho > 0$ such that $\forall x \in B_{\rho}(x^*)$, $H(x)$ is positive
    definite, and thus invertible.

    First we prove that if $\xh_i \in B_{\rho}(x^*)$, then $\xh_{i+1}$ is
    well defined, that is, that $\Hi + \errh$ is invertible. To do so, we rewrite the perturbed Hessian:
    \begin{equation*}
        \Hi + \errh = \Hi(I + \Hi^{-1}\errh).
    \end{equation*}
    Using \zcref{lemma22} with $M\equiv \Hi^{-1}\errh$ 
    shows that $I + \Hi^{-1}\errh$ is  invertible since,
    by the definition of $\errh$ in~\zcref{E^H} and by~\zcref{def:nu_i_thm}, we have 
    \begin{equation} \label{der0}
            \norm{ \Hi^{-1}\errh} \leq \norm{ \Hi^{-1}} \eh\norm{ \Hi } 
                        = \eh\kappa(\Hi)= \nu_i
           < 1. 
    \end{equation}
    As the product of two invertible matrices,  $\Hi + \errh$ is then invertible, with inverse given by
    \begin{equation}\label{inv}
    (\Hi+\errh)^{-1} =(I+\Hi^{-1}\errh)^{-1}\Hi^{-1}. 
    \end{equation}
    
    Now, we will bound the error $\norm{\xh_{i+1}-x^*}$. By
    \zcref{eq:single_eq_newton}, we have
 \begin{align*}       
        \xh_{i+1}-x^* &= \xh_i - x^* - (\Hi+\errh)^{-1}(\gi+\errg) + \errsum \\
        &= \big(I - (\Hi+\errh)^{-1}\Hi\big)(\xh_i-x^*) \\
        &\qquad - (\Hi+\errh)^{-1}\big(\gi - \Hi(\xh_i-x^*) + \errg\big) + \errsum.
    \end{align*}
    Taking norms yields
    \begin{multline} \label{bigdis}
        \norm{ \xh_{i+1}-x^* } \leq \norm{ I - (\Hi+\errh)^{-1}\Hi}\norm{ \xh_i-x^*} \\ 
        + \norm{ (\Hi+\errh)^{-1}}\big(\norm{ \gi - \Hi(\xh_i-x^*) } + \norm{ \errg }\big) 
        + \norm{ \errsum }.
    \end{multline}
    We now reformulate the first term of the right-hand side:
    \begin{align*}
      I - (\Hi+\errh)^{-1}\Hi &= (\Hi+\errh)^{-1}(\Hi+\errh-\Hi) \\
      &= (\Hi+\errh)^{-1}\errh \\
      &= (I+\Hi^{-1}\errh)^{-1}\Hi^{-1}\errh,
   \end{align*}
   and using~\zcref{lemma22} and~\zcref{der0} we can bound its norm by
    \begin{equation}\label{firstterm}
      \norm{I - (\Hi+\errh)^{-1}\Hi} \le \frac{\nu_i}{1-\nu_i}. 
    \end{equation}
   Then, using~\zcref{lemma22} again with $M\equiv \Hi^{-1}\errh$ and~\zcref{der0}, we obtain
    \begin{align} 
            \norm{(\Hi+\errh)^{-1}}&=\norm{(I+\Hi^{-1}\errh)^{-1}\Hi^{-1}} \nonumber \\
            &\leq \norm{(I+\Hi^{-1}\errh)^{-1}}\norm{\Hi^{-1}} \nonumber \\
            &\leq \frac{\norm{ \Hi^{-1} }}{1 - \norm{ \Hi^{-1}\errh}} \leq \frac{\norm{ \Hi^{-1}}}{1-\nu_i}. \label{der1.4}
    \end{align}
    Since $g(x^*)=0$, 
    \zcref{lemma21} with $w\equiv \xh_i$ and $v\equiv x^*$ yields
    \begin{equation}
       \norm{ \gi-H(\xh_i) (\xh_i-x^*) }\leq \frac{L_H}{2}\norm{ \xh_i-x^*}^2.\label{gi-Hi} 
    \end{equation}
    Moreover, by~\zcref{eq:d},~\zcref{E^+}, and~\zcref{der1.4}, we have
      \begin{equation} \label{norm-errsum}
        \begin{aligned}
      \norm{ \errsum} &\leq \ei\big(\norm{ \xh_i - x^*} + \norm{ x^*} +\norm{ \dhat_i}\big)\\
    &\leq  \ei\big(\norm{ \xh_i - x^*} + \norm{ x^*} +   \norm{ (\Hi+\errh)^{-1} }(\norm{ \gi } + \norm{ \errg})\big) \\
    & \leq  \ei\big(\norm{ \xh_i - x^*} + \norm{ x^*} +   \frac{\norm{ \Hi^{-1}} }{1-\nu_i}
    (\norm{ \gi } + \norm{ \errg})\big).  \end{aligned}
    \end{equation}
    We then use \zcref{gi-Hi} to bound the norm of $\gi$:
    \begin{equation} \label{bound_g}
        \begin{aligned}
            \norm{ \gi} &\leq \norm{ \gi - \Hi(\xh_i-x^*)} + \norm{ \Hi(\xh_i-x^*)} 
            \\
            &\leq \frac{L_H}{2}\norm{ \xh_i - x^*}^2 + \norm{ \Hi} \norm{ \xh_i -x^*}. 
        \end{aligned}
    \end{equation}
    Finally, collecting inequalities from~\zcref{firstterm} to~\zcref{bound_g}, and \zcref{E^g} to bound each of the terms in \zcref{bigdis} yields
    \begin{equation}\label{eq:contraction}
      \norm{ \xh_{i+1} - x^*} \leq \alpha_i\norm{\xh_i - x^*}^2 + \beta_i \norm{ \xh_i -x^* } + \limacc,
    \end{equation}
    where $\alpha_i$, $\beta_i$, and $\gamma_i$ are defined
    in \zcref{def:alpha_i_thm}, \zcref{def:beta_i_thm}, and \zcref{def:limacc_thm}.
    We have thus proved that if $\xh_i \in B_{\rho}(x^*)$ and $\nu_i < 1$, then \zcref{eq:contraction} holds.

    Assume now that $\xh_0\in B_{\rho}(x^*)$.
    Let $i_1$ be the first $i$ for which 
    the relative error does not decrease, that is, 
    $\norm{\xh_{i+1} - x^*} \ge \norm{\xh_i - x^*}$.
    Then, for all $i < i_1$, 
    since $\norm{\xh_{i+1} - x^*} < \norm{\xh_i - x^*}$,
    $\xh_{i+1} \in B_{\rho}(x^*)$ remains in the ball. If, moreover,
    $\nu_i<1$ for all $i\le i_1$, then we
    can inductively apply \zcref{eq:contraction} until $i=i_1$.
    Let us assume that there exists a $\theta_{\max} \in [0, 1)$ such that 
    $\theta_i := \alpha_i\norm{\xh_i - x^*} + \beta_i < \theta_{\max}$ for all $i < i_1$.
    Then, if 
    $\limacc \le (1-\theta_{\max})\norm{\xh_i - x^*}$, we have
    \[
      \norm{\xh_{i+1} - x^*} \leq \errconvrate\norm{\xh_i - x^*} + \limacc \leq (\errconvrate + 1 - \theta_{\max})\norm{\xh_i - x^*} <  \norm{\xh_i - x^*}
    \]
    which shows that at $i=i_1$, for the error to stop decreasing, we must necessarily have $\limacc > (1-\theta_{\max})\norm{\xh_i - x^*}$.
    Therefore \zcref{eq:contraction} holds for all $i$ until the first $i_0 \le i_1$
    for which $\norm{\xh_i-x^*} < \limacc/(1-\theta_{\max})$.
\end{proof}
\subsection{Proof of~\zcref{th:grad_norm}} \label{sec:proof_grad_norm}

\begin{proof}[Proof of \zcref{th:grad_norm}]
    If~\zcref{def:nu_i_thm} holds at iteration $i$, by~\zcref{th:rel_err}
$\xh_{i+1}$ is well defined and the error on the iterates is bounded as
in~\zcref{thes2.1}. We want to relate the norm of the gradient at iteration
$i+1$ with that at the previous iteration. 
    
   To do that, let us define $$\omega_i  = \ginext - \gi - \Hi(\xh_{i+1} - \xh_i).$$
    Note that by \zcref{eq:single_eq_newton} and \zcref{eq:d}, we have 
    \begin{align*}
        \ginext = \gi + \Hi(\dhat_i + \errsum) + \omega_i= -\errg - \errh \dhat_i + \Hi \errsum + \omega_i,
    \end{align*}
    which yields, by~\zcref{E^g},~\zcref{E^H} and~\zcref{E^+}, that
    \begin{align}
        \norm{ \ginext } &\leq \norm{ \errg} + \norm{ \errh } \norm{ \dhat_i } + \norm{ \Hi } \norm{ \errsum} + \norm{ \omega_i }\nonumber \\
        & \leq \eg
            + \norm{ \Hi }\norm{ \dhat_i }(\eh + \ei) + \ei \norm{ \Hi } \norm{ \xh_i} + \norm{ \omega_i }. \label{der3.2}
    \end{align}
    By~\zcref{def:err_model_newton}, using~\zcref{lemma22} with $M\equiv \Hi^{-1}\errh$ and \zcref{der0}, \zcref{inv}, we have
    \begin{align}
        \norm{ \dhat_i } &\leq \norm{( \Hi + \errh)^{-1}}(\norm{ \gi } + \norm{ \errg}) \nonumber\\
        &\leq \norm{(I+\Hi^{-1}\errh)^{-1}}\norm{\Hi^{-1}}(\norm{ \gi } + \norm{ \errg}) \nonumber \\
        &\leq \frac{\norm{ \Hi^{-1}}}{1-\nu_i} (\norm{ \gi } + \eg),  \label{der3.4}
    \end{align}
    which gives
    \begin{align}\label{der3.5}
      \norm{ \Hi}\norm{ \dhat_i } (\eh + \ei) \leq \Bigl(\frac{\nu_i}{1-\nu_i} + \frac{\ei \kappa(\Hi)}{1-\nu_i}\Bigr)(\norm{ \gi } + \eg).
   \end{align}
    By \zcref{lemma21} we have
    $\norm{ \omega_i } \leq \frac{L_H}{2}\norm{ \xh_{i+1} - \xh_i}^2.$
    By~\zcref{eq:single_eq_newton} 
    and using \zcref{E^+} and \zcref{der3.4}, it follows that
    \begin{align}   
            \norm{ \xh_{i+1} - \xh_i } &\leq (1+\ei)\norm{ \dhat_i } + \ei \norm{ \xh_i}\nonumber \\
       & \leq \norm{ \Hi^{-1}} \frac{1+\ei}{1-\nu_i}\left(\norm{ \gi} + \eg\right) + \ei \norm{ \xh_i}. \label{der3.7}
        \end{align}
    By the triangle inequality and \zcref{th:rel_err} we also have
    \begin{equation} \label{der3.8}
        \norm{ \xh_{i+1} - \xh_i } \leq (\errconvrate+1)\norm{ \xh_i-x^*} + \limacc,
    \end{equation}
    with $\errconvrate$ defined in \zcref{def:theta_i_thm}.
    By multiplying  \zcref{der3.7} and \zcref{der3.8}  term by term, we obtain:
    \begin{equation} \label{der3.9}
        \begin{aligned}
            \norm{ \omega_i } \leq & \frac{(1+\ei)(\errconvrate+1)}{2(1-\nu_i)}L_H\norm{ \Hi^{-1}} \norm{ \xh_i-x^*} \norm{ \gi } 
            \\
            &+ \frac{1+\ei}{2(1-\nu_i)}L_H\norm{ \Hi^{-1}} \limacc \norm{ \gi} \\
            &+\frac{(1+\ei)(\errconvrate+1)}{2(1-\nu_i)}L_H\norm{ \Hi^{-1}} \norm{ \xh_i-x^*} \eg 
            \\
            & + \frac{1+\ei}{2(1-\nu_i)}L_H\norm{ \Hi^{-1}} \limacc \eg 
            \\
            & + \frac{\errconvrate+1}{2}L_H\norm{ \Hi^{-1}} \norm{ \xh_i - x^*} \ei \norm{ \Hi} \norm{ \xh_i } 
            \\
            &+ \frac{1}{2}L_H \limacc \norm{ \Hi^{-1}} \ei \norm{ \Hi } \norm{ \xh_i },
        \end{aligned}
    \end{equation}
    where the penultimate and the last terms on the right-hand side of the inequality are obtained using $\norm{ \Hi } \norm{ \Hi^{-1} } = \kappa(\Hi) \geq 1$.
    Substituting \zcref{der3.5} and \zcref{der3.9} into \zcref{der3.2} we have
    \begin{align*}
        \norm{ \ginext} \leq \phi_i\norm{ \gi } + \limg,
    \end{align*}
    with
    \begin{equation} \label{eq:def_phi_i}
        \begin{aligned}
            \phi_i := &\frac{1+\ei}{2(1-\nu_i)}\left((\errconvrate + 1)\mu_i + \tau_i\right)
            \\
            &+ \frac{\nu_i + \ei \kappa(\Hi)}{1-\nu_i}, 
        \end{aligned}   
    \end{equation}
    and 
    \begin{equation} \label{eq:def_limg}
        \begin{aligned}
            \limg := &\eg\left(1 + \frac{\nu_i + \ei \kappa(\Hi)}{1-\nu_i}\right)
            \\
            &+ \eg\left(\frac{(1+\ei)}{2(1-\nu_i)}(\errconvrate + 1)\mu_i + \tau_i\right)
            \\
            &+ \frac{1}{2}\ei \norm{\Hi}\norm{\xh_i}\left((\errconvrate + 1)\mu_i + \tau_i\right) 
            \\
            &+ \ei \norm{ \Hi} \norm{ \xh_i },
        \end{aligned}
    \end{equation}
    where
    $\mu_i = L_H\norm{ \Hi^{-1} }\norm{ \xh_i - x^* }$ and $\tau_i = L_H\limacc\norm{ \Hi^{-1} }$, with $\limacc$ defined in \zcref{th:rel_err}. Rearranging the terms in \zcref{eq:def_phi_i} and \zcref{eq:def_limg}, we prove the first result of the theorem. 

    For the second part of the theorem, we proceed as for the proof
    of~\zcref{th:rel_err} in~\zcref{sec:proof_rel_err}, to prove that the gradient
    norm decreases linearly at a rate at least $\phi_i$ until it reaches $\limg/(1-\phi_{\max})$.
\end{proof}
\end{document}